\newtheorem{theorem}{Theorem}
\newtheorem*{untheorem}{Theorem}
\newtheorem{definition}{Definition}
\newtheorem{lemma}{Lemma}
\newtheorem{proposition}{Proposition}
\newtheorem{remark}{Remark}
\newcounter{claim}[theorem]
\newcounter{case}[theorem]
\newcounter{subcase}[case]
\newenvironment{case}[1][]{\refstepcounter{case}\par\noindent\textbf{Case \thecase:}\space#1}{}
\newenvironment{subcase}[1][]{\refstepcounter{subcase}\par\underline{Subcase \thesubcase:}\space#1}{}
\title{A topological characterization of toroidally alternating knots}
\author{Seungwon Kim}
\date{}
\begin{document}
\maketitle
\begin{abstract}
We extend Howie's characterization of alternating knots to give a topological characterization of toroidally alternating knots, which were defined by Adams. 
We provide necessary and sufficient conditions for a knot to be toroidally alternating. We also give a topological characterization of almost-alternating knots which is different from Ito's recent characterization.
\end{abstract}

\section{Introduction}

Recently, Greene \cite{Greene} and Howie \cite{Howie2, Howie} independently gave a topological characterization of alternating knots, which answered a long-standing question of Ralph Fox. Below is Howie's characterization:

\begin{untheorem}\cite{Howie2, Howie}
A non-trivial knot is alternating if and only if there exists a pair of connected spanning surfaces $\Sigma$ and $\Sigma'$ in the knot exterior such that
\begin{equation}\label{Howieoriginal} 
\chi(\Sigma) + \chi(\Sigma') + \frac{1}{2} i(\partial \Sigma, \partial \Sigma') = 2.
\end{equation}
\end{untheorem}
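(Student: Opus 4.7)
My plan is to prove both directions, with the easier forward direction given by explicit checkerboard surfaces and the harder reverse direction via an analysis of the decomposition of the knot exterior cut along $\Sigma \cup \Sigma'$.

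For ($\Rightarrow$), assume $K$ admits a reduced alternating diagram $D$ with $c$ crossings, and let $\Sigma$ and $\Sigma'$ be its black and white checkerboard surfaces, both of which are connected and spanning. Treating $D$ as a $4$-valent planar graph with $V = c$, $E = 2c$, and $F = b + w$, Euler's formula on $S^2$ yields $b + w = c + 2$. Each checkerboard surface deformation retracts onto a graph with one vertex per region of its color and one edge per crossing, so $\chi(\Sigma) + \chi(\Sigma') = (b - c) + (w - c) = 2 - c$. The boundaries $\partial \Sigma$ and $\partial \Sigma'$ are two curves on $\partial N(K)$ that meet in exactly two points per crossing, giving $i(\partial \Sigma, \partial \Sigma') = 2c$ and verifying \eqref{Howieoriginal}.

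For ($\Leftarrow$), suppose connected spanning surfaces $\Sigma$ and $\Sigma'$ satisfy \eqref{Howieoriginal}. I would first put them in general position so that $\Sigma \cap \Sigma'$ is a properly embedded $1$-manifold, and use standard innermost-disk and outermost-arc surgeries, together with the connectedness hypothesis, to remove simple closed curve components until $\Sigma \cap \Sigma'$ is a union of arcs with endpoints on $K$. Cutting the exterior $E(K) = S^3 \setminus N(K)$ along $\Sigma \cup \Sigma'$ then produces a collection of pieces whose boundaries decompose into faces of $\Sigma$, faces of $\Sigma'$, and portions of $\partial N(K)$.

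The main obstacle is converting the single numerical identity \eqref{Howieoriginal} into the rigid geometric conclusion that each piece is a $3$-ball with exactly the combinatorial face pattern of a neighborhood of an alternating crossing. An Euler-characteristic accounting of the decomposition, where the $\tfrac12 i(\partial\Sigma,\partial\Sigma')$ term compensates for faces shared between pieces, should show that \eqref{Howieoriginal} is precisely the borderline case: any piece that is not a ball, or any face arrangement richer than the alternating pattern, would upset the count. Once every piece is identified as a ball of the correct combinatorial type, collapsing the decomposition onto $S^2$ yields a diagram of $K$ in which $\Sigma$ and $\Sigma'$ reappear as checkerboard surfaces, and the way $\partial\Sigma$ and $\partial\Sigma'$ twist around $\partial N(K)$ at each crossing ball forces the over/under strands to alternate, giving the desired alternating diagram of $K$.
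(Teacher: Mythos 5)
First, note that the paper does not prove this statement at all: it is quoted from Howie (and, in another form, Greene) and used as a black box, so the only thing in the paper to compare against is the machinery in the proof of Theorem \ref{main}, which is Howie's method adapted from the sphere to the torus. Measured against that, your forward direction is essentially the standard verification and is fine except for one point you gloss over: $i(\partial \Sigma, \partial \Sigma')$ is the \emph{minimal} intersection number of the two boundary curves on $\partial N(K)$, not the number of points in which your particular representatives happen to meet. The two checkerboard boundaries meet in $2c$ points for \emph{any} connected diagram; what makes the identity work only for alternating diagrams is that in the alternating case all $2c$ intersection points have the same sign (equivalently, the boundary slopes differ by exactly $2c$), so the geometric count is already minimal. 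Without that remark the forward direction proves nothing special about alternating diagrams.

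The reverse direction has a genuine gap at exactly the step that constitutes the theorem. The sentence ``an Euler-characteristic accounting \dots should show that \eqref{Howieoriginal} is precisely the borderline case: any piece that is not a ball \dots would upset the count'' is an assertion, not an argument, and Euler characteristic alone cannot deliver it: $\chi$ does not distinguish a ball from other compact pieces one could get by cutting a knot exterior along two surfaces, and nothing in your setup controls how the faces of $\Sigma$ and $\Sigma'$ are glued around $\partial N(K)$. The actual proofs need substantially more: Howie first shows (this is the ``standard arc'' lemma quoted in the paper) that once $\partial\Sigma$ and $\partial\Sigma'$ intersect minimally, every arc of $\Sigma \cap \Sigma'$ has a standard crossing-ball neighborhood; he then contracts these arcs to obtain an immersed closed surface whose Euler characteristic equals the left-hand side of \eqref{Howieoriginal}, so that the hypothesis forces it to be a sphere, and the remaining work is to kill the self-intersection circles of this immersed sphere by surgery and read off the alternating diagram (this is precisely the analysis the paper redoes for the torus and Klein bottle in Theorem \ref{main}). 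Your preliminary step is also not innocuous: surgering away closed components of $\Sigma \cap \Sigma'$ by innermost disks changes $\chi(\Sigma) + \chi(\Sigma')$ (by $+2$ per essential surgery) and can disconnect a surface, so the hypothesis \eqref{Howieoriginal} is not preserved under the normalization you propose; the published arguments perform these surgeries only after the Euler-characteristic bookkeeping has been locked in, and have to account for what each surgery does. As written, the hard direction is a plan rather than a proof.
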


In \cite{Adams1}, Adams defined a \em{toroidally alternating knot} \rm as a knot which has an alternating diagram on an unknotted torus in $S^3$, such that the diagram divides the torus into a disjoint union of discs; i.e., the diagram is cellularly embedded. Toroidally alternating knots include almost-alternating knots and Turaev genus one knots. Adams showed that toroidally alternating knots which are not torus knots are hyperbolic.
In this paper, we give a topological characterization of toroidally alternating knots, extending Howie's characterization of alternating knots.

Several other generalizations of alternating knots have recently been topologically characterized. In \cite{Ito}, Ito gave a topological characterization of almost-alternating knots, which were defined by Adams in \cite{Adams2}. In \cite{Howie}, Howie defined weakly generalized alternating knots and gave a topological characterization of these knots on the torus. Furthermore, In \cite{Effie}, Kalfagianni gave a characterization of adequate knots in terms of the degree of their colored Jones polynomial.

In this paper, we consider a pair of spanning surfaces satisfying an equation similar to equation (\ref{Howieoriginal}). Theorem \ref{main} shows that in this case, the knot has a ``non-trivial" alternating diagram on the torus. Non-triviality is important because every knot  has an alternating diagram on the torus boundary of its regular neighborhood. See Figure \ref{trivialalternating}.

\begin{figure}[h]
\centering
\includegraphics[scale=0.5]{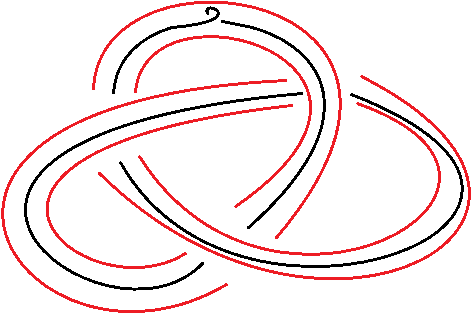}
\caption{Every knot has an alternating diagram on the torus boundary of its regular neighborhood.}
\label{trivialalternating}
\end{figure}
%Theorem \ref{main} shows that the most general result, which is a motivation for other characterizations in this paper. It shows that if two spanning surfaces of a knot satisfy the similar equation given by Howie, then the knot has a ``non-trivial" alternating diagram on the torus. Non-triviality is important because every knot is alternating on the torus in very obvious way: Every knot has an alternating diagram on a boundary of its regular neighborhood. 

Theorem \ref{main} also says that if one of the spanning surfaces is free, then we can find an alternating diagram of a 	knot on an unknotted torus. When the torus is unknotted, it is a Heegaard surface, and this condition plays an important role in defining \em alternating distances, \rm which measure topologically how far a knot is from being alternating (see \cite{Lowrance} for more details.). For example, the \em alternating genus of a knot \rm is the minimal genus of a Heegaard surface such that the knot has a cellularly embedded alternating diagram on it. The \em Turaev genus \rm is another interesting alternating distance, which is the minimal genus of a Heegaard surface with a Morse function condition, such that the knot has a cellularly embedded alternating diagram on it. See \cite{ChampanerkarKofman}. Alternating genus and Turaev genus are both defined for an alternating diagram that is cellularly embedded on the surface. The conditions in Theorem \ref{main} are not enough to find a cellularly embedded diagram: The alternating diagrams on the torus that we get from Theorem \ref{main} may have an annular region and they might not be checkerboard colorable. Note that every cellularly embedded alternating diagram on a closed orientable surface is checkerboard colorable.

\begin{figure}[h]
\centering
\subfloat[An example of a cellularly embedded alternating diagram on a torus.\label{celalt}]{\includegraphics[width=0.25\textwidth]{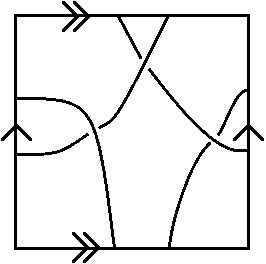}}\hfill
\subfloat[An example of a non-cellularly embedded, checkerboard-colorable alternating diagram on a torus.\label{checkalt}]{\includegraphics[width=0.25\textwidth]{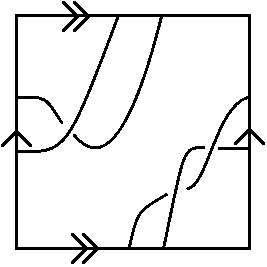}}\hfill
\subfloat[An example of an alternating diagram on a torus which is not checkerboard-colorable.\label{nocheck}]{\includegraphics[width=0.25\textwidth]{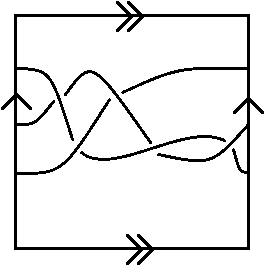}}
%There is another important concept in the alternating diagram on the surface. 
\end{figure}
In Theorem \ref{toroidal}, we give additional conditions -- that the spanning surfaces are {\em relatively separable}, and a liftable curve is {\em incident to a bigon} (which are defined below) -- to find a cellularly embedded alternating diagram on a torus. These conditions give a trichotomy for a pair of spanning surfaces:

\begin{enumerate}
	\item A pair of spanning surfaces is not relatively separable.
	\item A pair of spanning surfaces is relatively separable, and every liftable curve on two spanning surfaces is incident to a bigon.
	\item A pair of spanning surfaces is relatively separable, but there exists a liftable curve which is not incident to a bigon.
\end{enumerate}

Theorem \ref{toroidal} shows that a knot is toroidally alternating if and only if there exists a pair of spanning surfaces that satisfies certain conditions and either condition (1) or (2). If every pair of spanning surfaces satisfies condition (3), then we can still find some non-trivial alternating diagram on an unknotted torus by Theorem \ref{main}, but it may or may not be checkerboard colorable.

Finally, knot is \em almost-alternating \rm if it has an almost-alternating diagram that can be changed to an alternating diagram by one crossing change. In Theorem \ref{almostalternating}, we show that for any knot as in Theorem \ref{toroidal}, condition (2) is equivalent to a knot being almost-alternating. In \cite{Ito}, Ito gave a topological characterization of almost-alternating knots, but our characterization is different. He used all-$A$ and all-$B$ state surfaces of an almost-alternating diagram, which are the checkerboard surfaces of the Turaev surface of the almost-alternating diagram. We use a different pair of spanning surfaces to obtain a checkerboard-colorable alternating diagram on an unknotted torus, which is not cellularly embedded. It is an interesting question how the two checkerboard surfaces of this diagram are related to the spanning surfaces used in \cite{Ito}.

\subsection{\bf{Acknowledgements}}
The author thanks Ilya Kofman for helpful comments and guidance.

\section{Main results}
Throughout this paper, we use the following proposition that every alternating knot is both almost-alternating and toroidally alternating.

\begin{proposition}\label{alttor}
Let $K$ be an alternating knot. Then $K$ has an almost-alternating diagram and a toroidally alternating diagram.
\end{proposition}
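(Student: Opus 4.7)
The plan is to produce both required diagrams starting from a given alternating diagram $D$ of $K$ on $S^2\subset S^3$.

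For the almost-alternating diagram, I apply a Reidemeister I move to $D$, inserting a small kink at some edge $e$ of $D$. The two possible over/under choices for the new crossing $c$ give isotopic diagrams representing $K$. I choose the one that violates the alternating pattern locally at $c$. Since the two walks along the knot through $c$ have opposite over/under, and since the previous and next crossings along $e$ are opposite types in $D$, exactly one kink orientation keeps the diagram alternating and the other has $c$ as a dealternator. The resulting diagram $D^+$ therefore represents $K$ and becomes alternating after the single crossing change at $c$, so $D^+$ is an almost-alternating diagram of $K$.

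For the toroidally alternating diagram, I first produce $D^+$ as above and then apply Adams' construction from \cite{Adams1} at the dealternator. The construction removes two small discs from $S^2$ on opposite sides of $c$, attaches a tube in one of the two complementary balls of $S^2$ in $S^3$ to form an unknotted torus $T$, and re-routes one of the strands crossing at $c$ through this tube. The handle effectively flips the local over/under pattern at $c$, so on $T$ the crossing is alternating; the rest of the diagram is unchanged outside a small neighborhood of $c$ and remains alternating.

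The main obstacle is verifying that the diagram on $T$ is cellularly embedded and that $T$ is unknotted. Unknottedness is immediate because the handle is added trivially inside one ball. Cellular embeddedness requires checking that the two small discs removed from $S^2$, together with the re-routed strand on the tube, cut the handle and the adjacent regions of $D^+$ into discs. This follows from the fact that a longitudinal arc on a cylinder cuts it into a disc, combined with the specific local picture around $c$.
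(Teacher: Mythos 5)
You are attempting strictly more than the paper does --- its proof of this proposition is just two citations, to \cite{Adams2} for the existence of an almost-alternating diagram and to \cite{Adams1} for the passage from an almost-alternating diagram to a toroidally alternating one --- and the extra work introduces a genuine gap. Your kink diagram $D^+$ does satisfy the letter of the definition of an almost-alternating diagram when $D$ has at least one crossing (for the $0$-crossing unknot diagram there is no edge joining two opposite-type passages, and a one-crossing kink is alternating, so that case already needs a separate argument). But the dealternator $c$ you produce is a \emph{nugatory} crossing: one of its four corner regions is the monogon inside the kink, and the two corners on either side of the small loop are the \emph{same} region of $S^2 \setminus D^+$, since they are joined to each other around the outside of the loop. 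This is exactly the configuration in which the local handle construction at $c$ cannot yield a cellularly embedded diagram, so the ``main obstacle'' you flag at the end is not surmountable with this choice of $D^+$.

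To see this concretely, count Euler characteristics. If $D^+$ has $n$ crossings it has $n+2$ disc regions on $S^2$. Rerouting one strand of $c$ over the tube deletes the crossing from the projection, so the resulting $4$-valent graph on $T$ has $n-1$ vertices and $2(n-1)$ edges, and cellular embeddedness forces exactly $n-1$ complementary regions; i.e., the construction must decrease the number of regions by $3$. Erasing the lifted segment through $c$ merges the corners across it, and the tube rectangle (your cut-open cylinder, which is indeed a disc) then joins the two resulting regions: when the four corners at the dealternator are distinct this merges four old regions plus the rectangle into one, a net loss of $3$, as required. At your nugatory $c$ only three of the four corners are distinct regions, so the net loss is $2$, and the torus diagram is forced to contain a region of non-positive Euler characteristic --- an annular region. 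The diagram you build on $T$ is therefore alternating but not cellularly embedded, hence not toroidally alternating in Adams' sense. The repair is to use the construction of \cite{Adams2} (a Reidemeister-II-type ``tongue'' move, as in the paper's Figure \ref{almostalternatingdiagram}), which produces a dealternator with four distinct adjacent regions, before invoking \cite{Adams1}; or simply to cite both results, as the paper does.
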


\begin{proof}
By \cite{Adams2}, every alternating knot has an almost-alternating diagram. By \cite{Adams1}, we can find a toroidally alternating diagram from an almost-alternating diagram.
\end{proof}

\begin{definition}
A \textbf{spanning surface $\bar{\Sigma}$ of a knot $K$ in $S^3$} is a surface embedded in $S^3$ such that $\partial \bar{\Sigma} = K$.    
For $\bar{\Sigma}$, we define a \textbf{spanning surface $\Sigma$ in a knot exterior $E(K) = S^3 - int(N(K))$} by $\Sigma = \bar{\Sigma} \cap E(K)$. A spanning surface $\bar{\Sigma}$ of a knot $K$ in $S^3$ is \textbf{free} if $\pi_{1} (S^3 - \bar{\Sigma})$ is a free group.
\end{definition}  

Note that a spanning surface is free if and only if the closure of $S^3- \bar{\Sigma}$ is a handlebody.
\\

For every pair of spanning surfaces $\Sigma$ and $\Sigma'$, we can isotope them so that their boundaries realize the minimal intersection number, and each such isotopy can be extended to an isotopy of $S^3$. Then we have the following lemma from \cite{Howie2, Howie}.
\begin{lemma}[\cite{Howie2, Howie}]
If two spanning surfaces in a knot exterior are isotoped so that their boundaries realize the minimal intersection number, then every intersection arc is standard, as shown in Figure \ref{standard}.
\end{lemma}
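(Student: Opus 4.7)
The plan is to argue by contradiction: I will show that any non-standard intersection arc forces $|\partial\Sigma\cap\partial\Sigma'|$ to be non-minimal on $\partial N(K)$. First, after isotoping $\Sigma$ and $\Sigma'$ so that their boundaries realize their minimal transverse intersection on the torus $\partial N(K)$, I would perturb the interiors of $\Sigma$ and $\Sigma'$ to make them transverse in the interior of $E(K)$. The intersection $\Sigma\cap\Sigma'$ is then a properly embedded $1$-manifold whose arc components have endpoints on $\partial N(K)$, and by local transversality at each point $p\in\partial\Sigma\cap\partial\Sigma'$ exactly one arc of $\Sigma\cap\Sigma'$ emanates into $E(K)$ from $p$.

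Next, I would make precise what ``standard'' means at an endpoint $p$: in a small ball in $E(K)$ around $p$, the pair $(\Sigma,\Sigma')$ should be modeled on two transverse half-disks meeting in a single arc that leaves $\partial N(K)$ transversally, so that the four boundary arcs of $\Sigma$ and $\Sigma'$ emanating from $p$ alternate as $\partial\Sigma,\partial\Sigma',\partial\Sigma,\partial\Sigma'$ in the cyclic order around $p$ on the torus (the picture of Figure \ref{standard}). Any other local picture at $p$ — where the intersection arc bends back along $\partial N(K)$, or where the two boundary arcs on one side fail to alternate — is non-standard, and I would enumerate these as the possibilities to rule out.

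The core of the argument is then: if some arc $\alpha$ of $\Sigma\cap\Sigma'$ has a non-standard endpoint, the local failure of the alternating pattern produces a short sub-arc of $\partial\Sigma$ and a short sub-arc of $\partial\Sigma'$ that cobound a bigon on $\partial N(K)$, with the bigon filled in the interior of $E(K)$ by a thin strip cut from $\Sigma$ (or $\Sigma'$) along $\alpha$. Applying a standard innermost-bigon argument on the torus, this bigon can be used to construct an isotopy of $\Sigma$ (supported in a neighborhood of the strip and the bigon in $S^3$) that removes two points from $\partial\Sigma\cap\partial\Sigma'$ without introducing new intersections. This contradicts minimality of $|\partial\Sigma\cap\partial\Sigma'|$.

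The main obstacle is verifying that each non-standard local configuration really does yield such a boundary-compressing bigon on $\partial N(K)$ and that the resulting isotopy can be extended globally in $E(K)$ without creating fresh intersections elsewhere. This requires being careful about the two-sided neighborhoods of $\Sigma$ and $\Sigma'$ near the intersection arc and handling the case where the bigon on the torus is not innermost by passing to an innermost one among all bigons produced this way, then invoking the standard bigon-removal lemma for transverse curves on a surface.
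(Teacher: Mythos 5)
The paper does not actually prove this lemma: it is imported verbatim from Howie \cite{Howie2, Howie}, so the only meaningful comparison is with the argument in those references. Measured against that, your proposal has a genuine gap, and it sits exactly where you yourself flag ``the main obstacle.'' First, the notion of ``standard'' you work with does not capture the content of the lemma. The alternation $\partial\Sigma,\partial\Sigma',\partial\Sigma,\partial\Sigma'$ of the four boundary arcs in cyclic order around an endpoint $p$ is automatic for \emph{any} transverse intersection of two curves on the torus $\partial N(K)$, and a properly embedded intersection arc cannot ``bend back along $\partial N(K)$'' once the surfaces are in general position with respect to the boundary; so the local configurations you propose to enumerate at an endpoint are either vacuous or already excluded by transversality, not by minimality. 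What Figure \ref{standard} actually constrains is the embedded neighborhood of the \emph{whole} arc $\alpha$: the two bands $N(\alpha,\Sigma)\cup N(\alpha,\Sigma')$ must form the untwisted crossing-ball model with $\alpha$ parallel into $\partial N(K)$. Non-standardness is a global property of the arc (twisting of the bands along $\alpha$, knotting of $\alpha$ rel boundary, a wrong pairing of endpoints), none of which is visible in a small ball around one endpoint.

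Second, the implication that carries the entire weight of the proof --- that a non-standard arc produces sub-arcs of $\partial\Sigma$ and $\partial\Sigma'$ cobounding a bigon on $\partial N(K)$, hence an isotopy reducing $|\partial\Sigma\cap\partial\Sigma'|$ --- is asserted rather than derived. A bigon on the torus between the two boundary curves is precisely the bigon-criterion obstruction to their minimal position; the lemma's content is that boundary minimality also controls the \emph{interior} arcs, so the whole proof consists of showing how a bad interior arc forces such a boundary bigon (or some other violation of minimality). A workable route is to use the consequence of minimal position that all points of $\partial\Sigma\cap\partial\Sigma'$ have the same sign on $\partial N(K)$, list the finitely many models for the band neighborhood of an arc, and check that each non-standard model either forces its two endpoints to have opposite signs or permits an isotopy of one surface removing a pair of boundary intersection points. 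As written, you never list the non-standard models and never verify the reduction for any of them, so the proof is incomplete at its central step.
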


\begin{figure}[h]
\centering
\includegraphics[scale=0.5]{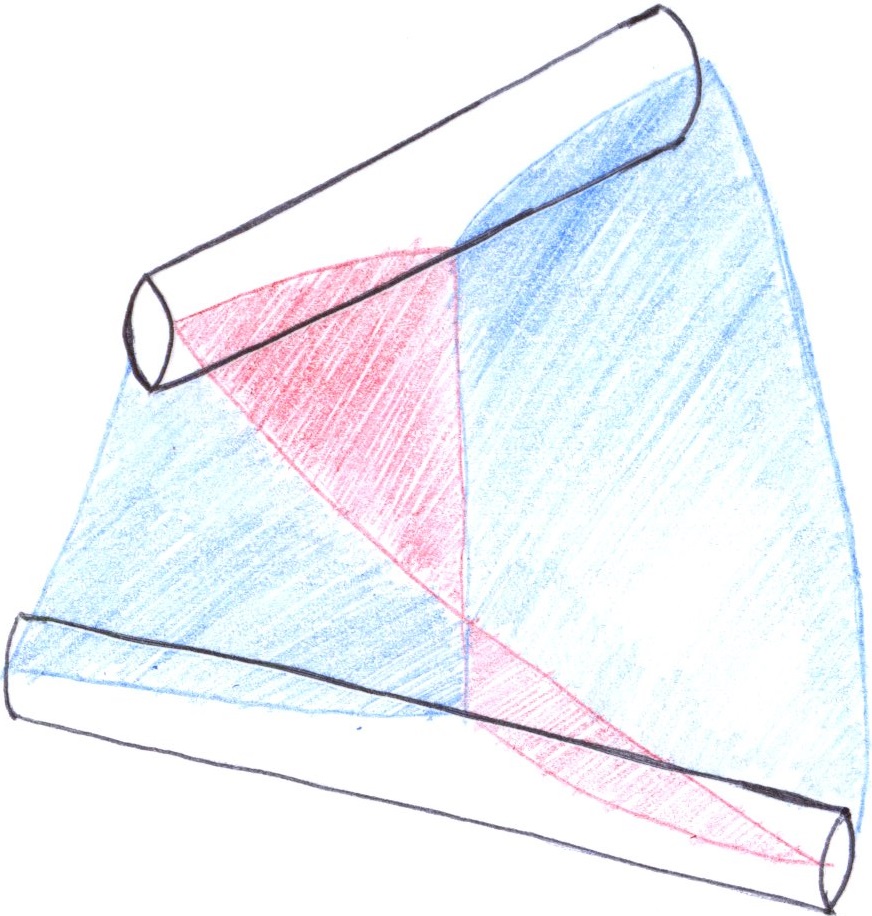}
\caption{A neighborhood of a standard intersection arc of two spanning surfaces in a knot exterior.}
\label{standard}
\end{figure}

Consider $\bar{\Sigma} \cup \bar{\Sigma'}$ in $S^3$ as above. If we contract all standard arcs, as in Figure \ref{contraction}, then we get an immersed surface in $S^3$ such that every self-intersection is a simple closed curve. We also get a connected $4$-valent graph $G_K$ on this immersed surface, coming from $K$, which is away from every self-intersection loop. We will call this immersed surface an \textbf{almost-projection surface} of $\Sigma$ and $\Sigma'$.
\begin{figure}[h]
\centering
\includegraphics[scale=0.5]{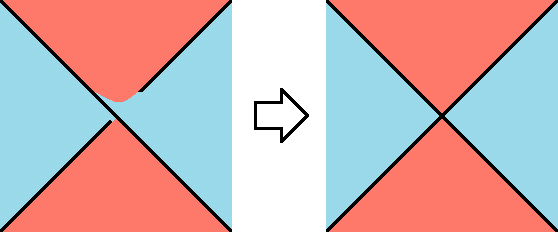}
\caption{Contracting a standard arc intersection}
\label{contraction}
\end{figure}

Lastly, we define the complexity of an alternating diagram $D_K$ of a knot $K$ on an embedded surface. Below, let $G_K$ denote the $4$-valent graph obtained from projecting $D_K$ on the surface naturally.

\begin{definition}
Let $K$ be a knot which has a diagram $D_K$ on some embedded surface $T$. 
\begin{equation}
\begin{split}
r(D_K, T) = min\{|\gamma \cap G_K| ~ \big | & ~ \gamma ~\text{is a boundary of a compressing disc of $T$} \\ 
& ~ \text{which intersects $G_K$ transversely only in edges of $G_K$.}\}
\end{split}
\end{equation} 
\end{definition}
 
\begin{theorem}\label{main}

Let $\Sigma$ and $\Sigma'$ be connected, spanning surfaces in the knot exterior $E(K)$, such that
\begin{equation} 
\label{howietype}
\chi(\Sigma) + \chi(\Sigma') + \frac{1}{2} i(\partial \Sigma, \partial \Sigma') = 0,
\end{equation}
where $i(\partial \Sigma, \partial \Sigma')$ is the minimal intersection number of $\partial \Sigma$ and $\partial \Sigma'$.
Then there exists a torus $T$ embedded in $S^3$ such that $K$ has an alternating diagram $D_K$ on $T$ with $r(D_K, T) \geq 2$.
Furthermore, if $\Sigma$ or $\Sigma'$ is free, then $T$ is an unknotted torus (i.e., a Heegaard torus).
\end{theorem}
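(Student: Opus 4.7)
The plan is to follow Howie's strategy in the parallel case where \eqref{howietype} equals $2$ (which forces a sphere) and adapt it to our value of $0$ (which should force a torus). After isotoping so that $\partial\Sigma$ and $\partial\Sigma'$ realize their minimum intersection number -- so that, by the preceding lemma, every intersection arc of $\bar{\Sigma} \cap \bar{\Sigma'}$ is standard -- let $c = \tfrac{1}{2} i(\partial\Sigma, \partial\Sigma')$. Then $\bar{\Sigma} \cap \bar{\Sigma'}$ is the graph obtained from $K$ by attaching the $c$ standard arcs, which has Euler characteristic $-c$; inclusion--exclusion gives $\chi(\bar{\Sigma} \cup \bar{\Sigma'}) = \chi(\Sigma) + \chi(\Sigma') + c$. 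Since each standard arc is contractible, contracting it preserves Euler characteristic, so the almost-projection surface $F$ satisfies
\[
\chi(F) = \chi(\Sigma) + \chi(\Sigma') + \tfrac{1}{2} i(\partial\Sigma, \partial\Sigma') = 0
\]
by hypothesis, and the 4-valent graph $G_K$ inherits an alternating labeling forced by the local model at each contracted standard arc.

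Next I would resolve each self-intersection circle of $F$ -- which is disjoint from $G_K$ -- by cut-and-paste, so the diagram is not disturbed. Each resolution preserves Euler characteristic, and choosing resolutions inductively so that the component carrying $G_K$ does not gain genus produces an embedded closed surface in $S^3$ whose component containing $K$ has $\chi = 0$. Since embedded closed surfaces in $S^3$ are orientable, this component is an embedded torus $T$, and $D_K$ on $T$ is alternating.

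For $r(D_K, T) \geq 2$, I would argue by contradiction from the minimality of $i(\partial\Sigma, \partial\Sigma')$. If a compressing disc of $T$ had boundary $\gamma$ disjoint from $G_K$, then $\gamma$ would lie in a single face of $D_K$, which tracing back through the construction sits inside $\Sigma$ or $\Sigma'$; the compressing disc then furnishes a surgery of that spanning surface reducing $|\partial\Sigma \cap \partial\Sigma'|$, a contradiction. If $\gamma$ met $G_K$ transversely in a single edge, then capping $\gamma$ off with a pushoff disc on the opposite side of $T$ yields a 2-sphere in $S^3$ meeting $K$ in a single point, impossible since a sphere meets $K$ in an even number of points.

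For the free case, suppose $\bar{\Sigma}$ is free, so $S^3 \setminus N(\bar{\Sigma})$ is a handlebody $H$. By arranging the cut-and-paste resolutions so that $T$ is pushed entirely onto the $\bar{\Sigma'}$-side of $\bar{\Sigma}$, one side of $T$ sits inside $\overline{H}$; an analysis of how that side is assembled from pieces of $\bar{\Sigma'}$ and local resolution patches shows it is itself a handlebody, and being bounded by a torus it is a solid torus. Hence $T$ is Heegaard, and the argument is symmetric in $\bar{\Sigma}$ and $\bar{\Sigma'}$. The main obstacle, I expect, is executing the resolutions simultaneously so that (i) the resulting $T$ is a single connected torus, (ii) the bound $r(D_K, T) \geq 2$ survives, and (iii) the Heegaard property holds whenever one of the spanning surfaces is free; coordinating all three conditions amounts to tracking how each local choice of resolution interacts with the global topology of the almost-projection surface.
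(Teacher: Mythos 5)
Your setup is right: the almost-projection surface $F$ has $\chi(F)=0$, and the whole proof is about turning $F$ into an embedded torus carrying the alternating diagram. But the step ``resolve each self-intersection circle by cut-and-paste, choosing resolutions so that the component carrying $G_K$ does not gain genus'' is exactly where the real work lives, and as written it does not go through. First, you never rule out essential self-intersection circles when $F$ is an immersed torus; the paper does this by noting that the two preimages of such a circle cobound an annulus disjoint from $f^{-1}(G_K)$, hence lie in a single region of the diagram, forcing the double curve to be a self-intersection of the \emph{embedded} surface $\bar{\Sigma}$ or $\bar{\Sigma}'$ --- a contradiction. Second, and more seriously, since spanning surfaces may be non-orientable, $F$ can be an immersed \emph{Klein bottle}, which necessarily has essential double curves that cannot be removed by disc surgery. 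Your appeal to ``embedded closed surfaces in $S^3$ are orientable'' does not resolve this: one must actually show that the diagram survives onto an embedded torus. The paper does not do this by resolving the double curves at all; it shows $G_K$ lies on an embedded annulus $f(A)$ inside an embedded M\"obius band $f(M\cup A)$, takes $T$ to be the boundary of a regular neighborhood of that M\"obius band, and lifts the diagram through the induced double cover. Your resolution scheme would have to justify that some choice of local resolutions yields a connected embedded torus still containing $G_K$ with all the required properties, and that justification is missing.

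There are two further gaps. Your argument for $r(D_K,T)\ge 2$ is incorrect: if a compressing disc of $T$ misses $G_K$, surgering the relevant spanning surface along it does not change $|\partial\Sigma\cap\partial\Sigma'|$ (the disc is disjoint from the boundaries), so there is no contradiction with minimality. What actually happens is that $\chi$ increases by $2$, the Howie sum becomes $2$, and $K$ is \emph{alternating}; the theorem's conclusion then requires the separate input of \cite[Corollary 4.6]{Kim} to produce a (different) torus with $r=2$. Your parity argument ruling out $r=1$ is fine, though the paper gets it more directly from checkerboard colorability. Finally, the Heegaard claim in the free case is only gestured at; the paper proves it via a lemma stating that for a checkerboard-colorable connected diagram on $T$ with $r\ge 2$, the torus is unknotted if and only if a checkerboard surface is free, which in turn rests on the region structure (all discs except at most one annulus) and an explicit handle decomposition of $S^3-\Sigma$ built from $S^3-T$; the Klein bottle case needs its own separate unknottedness argument. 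You correctly flag these coordination issues as the main obstacle, but they are the theorem, not a detail to be deferred.
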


\begin{remark}
\rm
Howie \cite{Howie} considered an alternating diagram on the torus which is checkerboard colorable. To get his characterization of weakly generalized alternating knots, he added certain other conditions. In Theorem \ref{main}, we show that without additional conditions, we can still find a non-trivial alternating diagram of the knot on the torus.
\end{remark}
%We can checkerboard color the immersed surface using $D_K$. Let the color of the region comes from $\bar{\Sigma}$ as black, and $\bar{\Sigma'}$ as white. Now we have two cases to consider.
\begin{proof}
First, we will prove the existence of $D_K$ and $T$. Consider an almost-projection surface of $\Sigma$ and $\Sigma'$. By equation (\ref{howietype}), this almost-projection surface can be either an immersed torus or an immersed Klein bottle.
\\
\begin{case}\label{t1ac1}
The almost-projection surface is an immersed torus.
\end{case}
\\

For each self-intersection curve, we have two possibilities. First, a self-intersection curve can bound a disc on the immersed torus. Then we can find an innermost self-intersection curve inside the disc. If we surger along the disc bounded by the innermost self-intersection curve as in Figure \ref{surgery}, then the resulting surface can be disconnected, or it is an immersed sphere. 

\begin{figure}[h]
\centering
\includegraphics[scale=0.5]{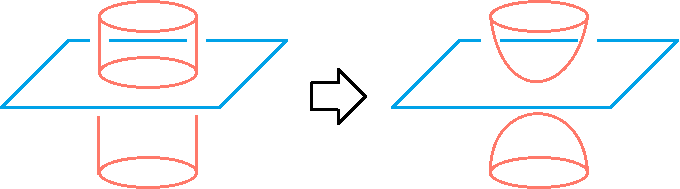}
\caption{Sugery along a disc}
\label{surgery}
\end{figure}

If the resulting surface is an immersed sphere, then by \cite{Howie2, Howie}, $K$ is alternating. Hence, by Proposition \ref{alttor}, $K$ is toroidally alternating. Also in this case, by Corollary 4.6 in \cite{Kim}, $r(D_K, T) =2$. If the resulting surface is disconnected, then one component is a torus, and the other component is a sphere. If $G_K$ is on the sphere component, then again, $K$ is alternating. Otherwise, we have reduced the number of self-intersections. We continue until all such inessential self-intersections are eliminated.

On the other hand, the self-intersection curve can be essential on the immersed torus. But we now prove by contradiction that this cannot occur. Let $f : T' \rightarrow S^3$ be an immersion map, and let $\sigma$ be a self-intersection loop on $f(T')$. Then $f^{-1} (\sigma)$ consists of two essential simple closed curves, so they bound an annulus on the torus as in Figure \ref{torus}. Since $G_K$ does not intersect with any self-intersection curves, $f^{-1}(G_K)$ is a connected 4-valent graph on one of the annuli bounded by $f^{-1}(\sigma)$. Hence, both components of $f^{-1}(\sigma)$ are in the same region of $f^{-1}(G_K)$, which implies that $\sigma$ is a self-intersection of either $\bar{\Sigma}$ or $\bar{\Sigma'}$, which contradicts the fact that each of them is an embedded surface. Hence, there are no essential self-intersection curves. 

\begin{figure}[h]
\centering
\includegraphics[scale=0.5]{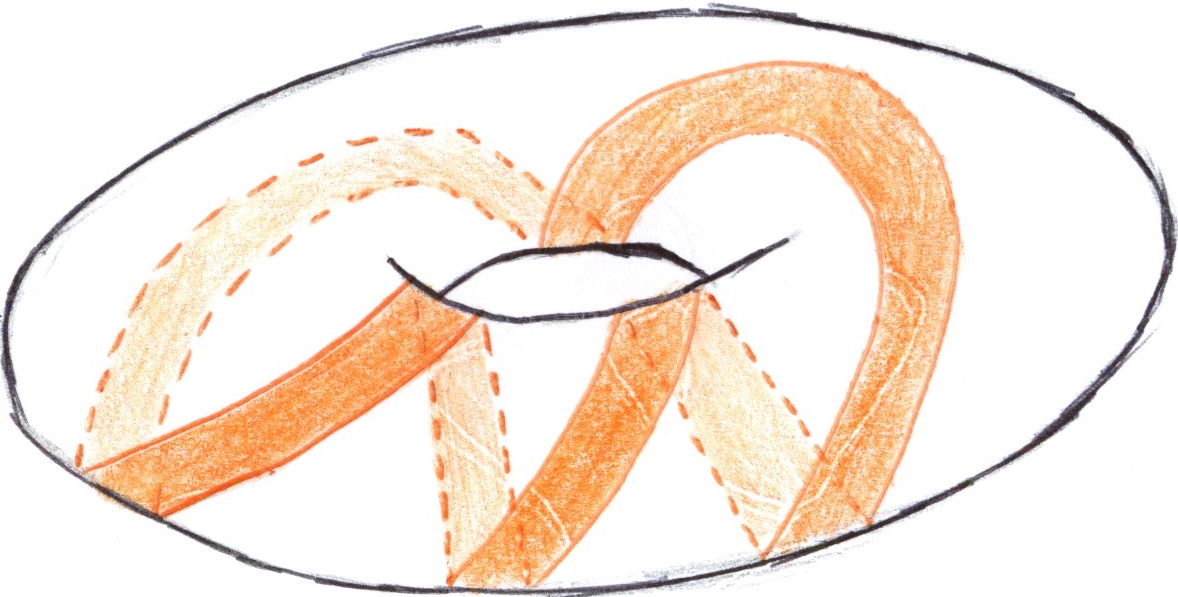}
\caption{Preimages of a self-intersection loop bound an annulus on the torus.}
 \label{torus}
\end{figure}

Therefore, $G_K$ is a $4$-valent graph on an embedded torus $T$ in $S^3$. We can recover the diagram $D_K$ from $G_K$ by replacing each vertex of $G_K$ with a neighborhood of a standard arc. If the resulting diagram $D_K$ is not alternating, there exists a bigon between $\partial \Sigma$ and $\partial \Sigma'$, which contradicts the minimality of the intersection number of boundaries. Hence, $K$ has an alternating diagram on $T$. Also, from the construction, $D_K$ is checkerboard colorable. 

We claim that either $r(D_K, T) \geq 2$ or $K$ is alternating. Suppose that for the resulting alternating diagram $D_K$ on the torus $T$, $r(D_K, T) < 2$. Since $D_K$ is checkerboard colorable, every simple closed curve on $T$ intersects $G_K$ transversely in an even number of points. Therefore, $r(D_K, T) = 0$, so we can find a compressing disc of $T$ which does not intersect $D_K$. Then compressing $T$ along this disc yields an embedded $S^2$, so $K$ is an alternating knot. Then, as above, by \cite[Corollary 4.6]{Kim}, we can find an alternating diagram $D_K$ of $K$ on some embedded torus $T''$ such that $r(D_K, T'') = 2$. This concludes the proof of Case \ref{t1ac1}.
\\

%Now, we need to show that they $r(D_K, T) \geq 2$. Whenever every region is a disc, then every essential simple closed curve on the torus intersects $D_K$ in positive even number of points, so we only need to consider whenever it has one annular region. Suppose that the core of the annular region bounds a disc in either side. Then we can compress $T$ along this core to change $T$ into a sphere or two spheres. Then $K$ is alternating, so by similar argument above, we can find an almost alternating diagram of $K$ and consider its Turaev surface. Otherwise, 

\begin{case}\label{t1ac2}
The almost-projection surface is an immersed Klein bottle.
\end{case}
\\

Let $f : B \rightarrow S^3$ be an immersion of a Klein bottle $B$. If a simple closed self-intersection curve of $f(B)$ bounds a disc on $f(B)$, then we can surger along this curve as in Figure \ref{surgery} to reduce all such inessential intersections. If $G_K$ is on an immersed sphere, then $K$ is alternating. Hence, we can assume that all preimages of the remaining self-intersections are essential simple loops of $f(B)$. Let $s_1$, $s_2$ $\subset B$ be the preimages of an essential self-intersection $\sigma$ of $f(B)$. For $i = 1,2$, we call a regular neighborhood of $s_i$ $2$-sided if it is homeomorphic to an annulus, or $1$-sided if it is homeomorphic to a M\"obius band. Furthermore, the two regular neighborhoods of $s_1$ and $s_2$ in $B$ are homeomorphic, because an annulus and M\"obius band embedded in $S^3$ cannot intersect only in the core loop. Then we have three subcases to consider, depending on the topology of $s_1$ on $B$:
\\
%Let simple loop $\gamma$ be \textbf{$2$-sided} if its regular neighborhood in $\Sigma$ is homeomorphic to an annulus. Otherwise, $\gamma$ is called \textbf{$1$-sided}. 
\begin{figure}[h]
\centering
\subfloat[An example of a $2$-sided non-separating curve on the Klein bottle.\label{2sidenon}]{\includegraphics[width=0.25\textwidth]{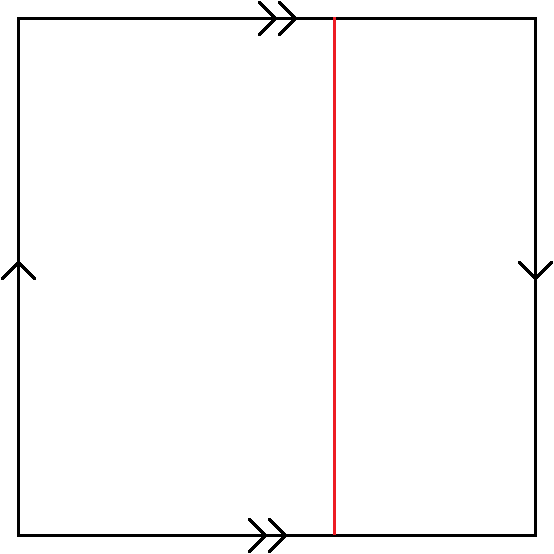}}\hfill
\subfloat[An example of a $2$-sided separating curve on the Klein bottle.\label{2side}]{\includegraphics[width=0.25\textwidth]{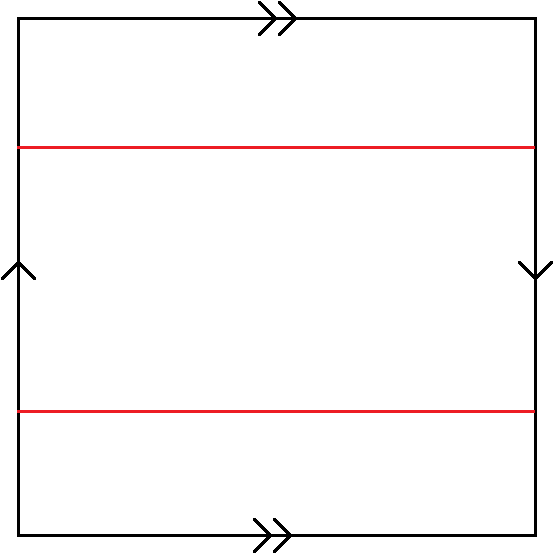}}\hfill
\subfloat[An example of a $1$-sided curve on the Klein bottle.\label{1side}]{\includegraphics[width=0.25\textwidth]{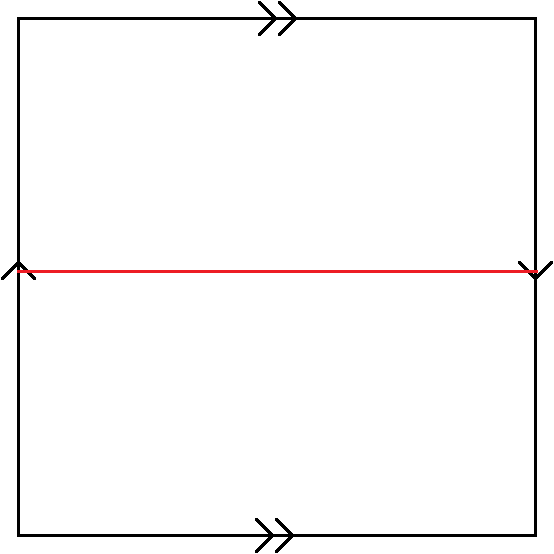}}
\end{figure}

\begin{subcase}
$s_1$ is a non-separating, $2$-sided curve on $B$. 
\end{subcase}
\\

We prove by contradiction that this subcase cannot occur. The complement of a regular neighborhood of $s_1$ in $B$ is an annulus. Hence, $s_2$ is the core of the annulus. Then $s_1$ and $s_2$ cut $B$ into two annuli, and $f^{-1}(G_K)$ is on one of them. Hence, $s_1$ and $s_2$ are in the same region of $f^{-1}(G_K)$ on $B$. See Figure \ref{Kleinbottle}. This implies that $\sigma$ is a self-intersection of $\Sigma$ or $\Sigma'$, which contradicts the assumption that $\Sigma$ and $\Sigma'$ are embedded.
\begin{figure}[h]
\centering
\includegraphics[scale=0.5]{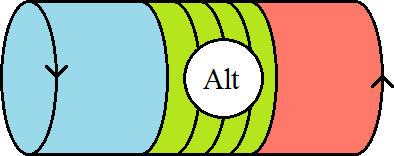}
\caption{Two pre-images cut a Klein bottle into two annuli and $f^{-1}(G_K)$ is on one of them.} 
\label{Kleinbottle}
\end{figure}
\\

\begin{subcase}
$s_1$ is a separating, $2$-sided curve on $B$.
\end{subcase}
\\
%\begin{figure}[h]
%\centering
%\includegraphics[scale=0.2]{2side}
%\caption{A $2$-sided separating curve on the Klein bottle.}
%\label{2side}
%\end{figure} 

In this case, $s_1$ cuts $B$ into two M\"obius bands. Then $s_2$ is on one of the M\"obius bands, and cuts it into one M\"obius band and one annulus. Hence, $s_1$ and $s_2$ cut $B$ into two M\"obius bands and one annulus. 

\begin{figure}[h]
\centering
\includegraphics[scale=0.2]{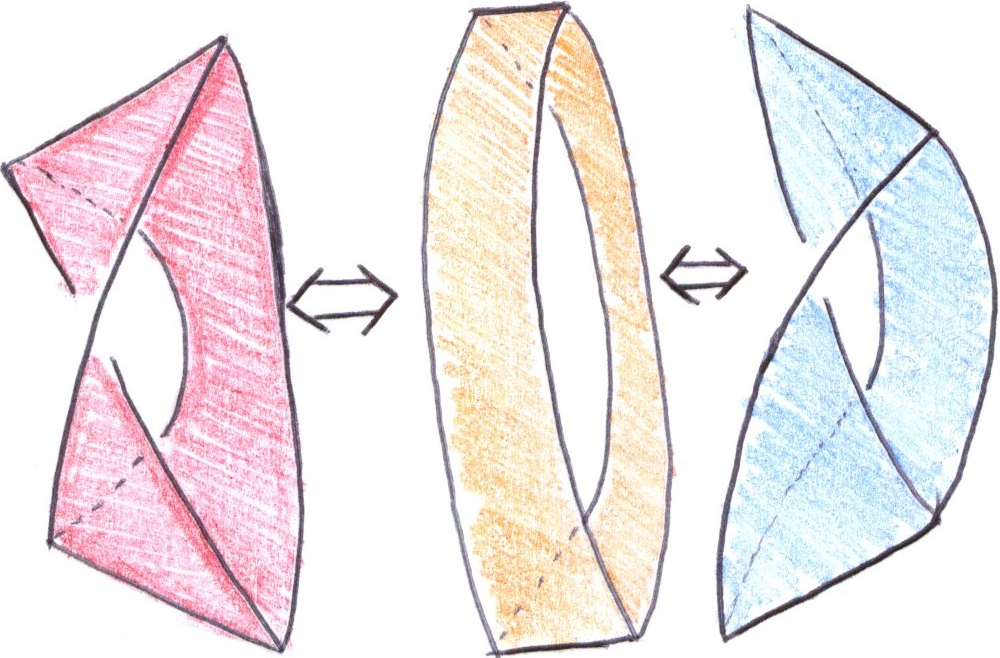}
\caption{Two pre-images cut a Klein bottle into two M\"obius bands and one annulus.}
\label{kleinbottle2} 
\end{figure}

Furthermore, $f^{-1}(G_K)$ is contained in one of the components. If $f^{-1}(G_K)$ is on the M\"obius band, then $\sigma$ is a self-intersection of $\bar{\Sigma}$ or $\bar{\Sigma'}$ which is impossible. Hence, $f^{-1}(G_K)$ is on the annulus. In this annulus, every preimage of an essential self-intersection is isotopic to a core of the annulus. Let $A$ be the annulus which contains $f^{-1}(G_K)$ and does not contain any preimages of self-intersections. Then we can recover $D_K$ from $G_K$, as in the torus case, so that $K$ is alternating on $f(A)$. 

Now, to construct the torus, we consider $B - A$, which consists of two disjoint M\"obius bands $M$ and $M'$. The image of each M\"obius band under $f$ is a subset of either $\Sigma$ or $\Sigma'$. Furthermore, both M\"obius bands cannot be contained in the same spanning surface. Consider $M \cup A$, which is homeomorphic to a M\"obius band. Now, $f(M \cup A)$ is embedded in $S^3$ because every self-intersection of $f(B)$ is an intersection of $f(M)$ and $f(M')$. Consider a thickening of the M\"obius band $f(M \cup A)$ in $S^3$, which is homeomorphic to a solid torus. Let $T$ be its boundary. Then using the natural projection, we can think of $T - \partial f(M \cup A)$ as a two fold cover of the M\"obius band $f(M \cup A)$. Then $f(A)$ is lifted to two annuli with disjoint interiors on the torus. Since $K$ is alternating on $f(A)$, we can choose a lift of the alternating diagram $D_K$ to one of the annuli. Hence, $D_K$ is alternating on $T$.

\begin{figure}[h]
\centering
\includegraphics[scale=0.5]{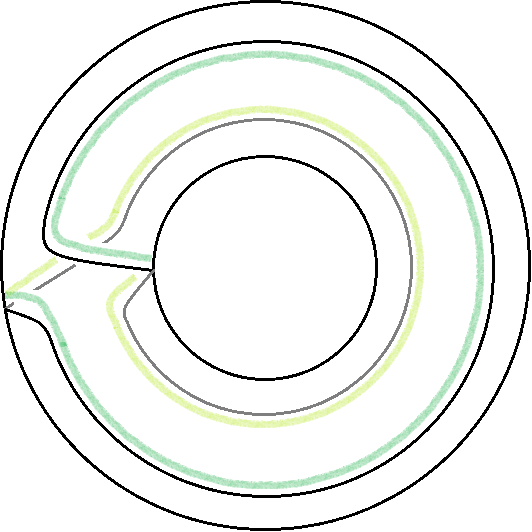}
\caption{Torus $T$, coming from thickening the M\"obius band $f(M \cup A)$, and an example of one of the lifts of $f(A)$ on $T$, denoted by a shaded band.}
\label{thickenedmobius} 
\end{figure}

Now we show that $r(D_K, T) \geq 2$ or $K$ is alternating. By construction, $T$ bounds a solid torus for which the boundary of every compressing disc intersects each lift of $f(A)$ twice. If this boundary curve intersects the diagram less than twice, then this implies that $G_K$ is contained in a disc in $f(A)$. But then, this implies that $M$ and $M'$ are in the same region of $f^{-1}(G_K)$, which cannot occur because these M\"obius bands are not contained in the same spanning surface. Hence, the boundary of every compressing disc of this solid torus intersects $G_K$ at least twice. 

Finally, $r(D_K, T) < 2$ may occur for a compressing disc on the other side of $T$.
If the $3$-manifold on the other side of $T$ has a compressing disc $\Omega$, then it is a solid torus, hence, $T$ is an unknotted torus. Note that $\partial \Omega \cap \partial f(M \cup A) \neq \emptyset$. If $\partial \Omega$ does not intersect the diagram, then just as above, $D_K$ is contained in a disc. Suppose that $\partial \Omega$ intersects the diagram once. This implies that $f(M \cup A)$ is an embedded M\"obius band in $S^3$ such that the core is the unknot and its boundary is also the unknot. This implies that we can find an essential arc on $f(A)$ which intersects $G_K$ transversely once. Hence, $K$ is a connected sum of an alternating knot and a knot which is isotopic to a boundary circle of $f(A)$, which is an unknot. Hence, $K$ is alternating.
\\
\begin{subcase}
$s_1$ is a $1$-sided curve on $B$.
\end{subcase}
\\
%\begin{figure}[h]
%\centering
%\includegraphics[scale=0.2]{1side}
%\caption{A $1$-sided curve on the Klein bottle.}
%\label{1side} 
%\end{figure}

The complement of $s_1$ and $s_2$ in $B$ is an annulus. Hence, $f^{-1}(G_K)$ is on the annulus, and the claim follows by the same argument as in the previous subcase.  

This completes the proof of Case \ref{t1ac2}.

To show that the torus $T$ is unknotted, we need following lemmas.

\begin{lemma}\label{complexity}
Let $D_K$ be a knot diagram on the torus $T$ with $r(D_K, T) \geq 2$. Then every region of $T - G_K$ is homeomorphic to a disc, except possibly one region which is homeomorphic to an annulus.
\end{lemma}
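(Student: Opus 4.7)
My plan is a case analysis on the topology of the abstract closure $\tilde R$ of a region $R$ of $T - G_K$. Since $\tilde R$ is a compact connected subsurface of the torus $T$, it has genus at most $1$ and some number of boundary circles $k \geq 1$; I want to show that the only possibilities are $\tilde R$ a disc or an annulus, and that at most one annulus occurs. Throughout I will use two facts repeatedly: (i) $G_K$ is connected, so it must lie in a single connected component of the topological complement of $R$ in $T$; and (ii) every torus in $S^3$ bounds a solid torus on at least one side (via the loop theorem and Alexander's theorem), so there is a meridian curve $\mu$ on $T$ bounding a compressing disc, and by hypothesis $|\mu \cap G_K| \geq 2$.

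If $\tilde R$ has genus $1$, an Euler characteristic count using $\chi(\tilde R) = -k$ and the fact that complementary components in $T$ have genus $\leq 1$ forces $\overline{T - R}$ to be a disjoint union of $k$ discs. When $k \geq 2$, the $k$ boundary circles of $R$ all lie in $G_K$ but sit in $k$ disjoint disc components, contradicting connectedness of $G_K$. When $k = 1$ the entire complement is a single disc, so the meridian $\mu$ can be isotoped to lie inside $R$ and hence away from $G_K$, giving a compressing disc with $|\partial \cap G_K| = 0$, contradicting $r(D_K, T) \geq 2$.

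If $\tilde R$ has genus $0$ with $k \geq 3$ boundary circles, let $c_1, \ldots, c_k$ be their images in $T$. Any two disjoint essential simple closed curves on a torus are parallel, so the essential $c_i$ are pairwise parallel; since $R$ is connected and cannot cross $\partial R$, $R$ fits into a single cyclic annulus cut off by the essential curves, which forces at most two $c_i$ to be essential. In each subcase (zero, one, or two essential boundaries) a direct inspection of $T - R$ shows that it decomposes into at least two disjoint pieces --- small discs bounded by the null-homotopic $c_j$ together with a separate region on the far side of any essential $c_i$ --- whose closures jointly contain the boundary circles of $R$. Connectedness of $G_K$ is then violated.

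The remaining case is $\tilde R$ an annulus ($k = 2$). The two boundary circles, being isotopic through $\tilde R$, lie in the same isotopy class on $T$; so they cannot be one essential and one null-homotopic, and if both were null-homotopic a nested/disjoint-disc analysis would again either break connectedness of $G_K$ or force $R$ to have genus $1$. Hence the core of $R$ is essential in $T$. For uniqueness, two distinct annular regions $R_1, R_2$ would have disjoint essential parallel cores $\gamma_1, \gamma_2$ bounding an annulus $A$ in $T$; since $G_K$ is connected and disjoint from $\gamma_1 \cup \gamma_2$, it lies in $A$ or in the complementary annulus $A'$, and a tubular neighborhood argument at each $\gamma_i$ places the annulus not containing $G_K$ inside both $R_1$ and $R_2$, forcing $R_1 = R_2$. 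The most delicate step throughout will be handling configurations where abstract boundary circles of $\tilde R$ coincide on $T$ (i.e., $R$ is locally on both sides of some curve in $G_K$), which alters the combinatorial picture of $T - R$; in each such configuration the same connectedness or compressibility argument still applies after a small adjustment.
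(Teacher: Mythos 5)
Your proof is correct and rests on the same two ingredients as the paper's: connectedness of $G_K$ (which forces the complement of any single region, or of any pair of regions, to be connected) and the existence of a compressing disc of $T$ whose boundary must meet $G_K$ at least twice. The paper packages your genus-and-boundary case analysis more compactly via the single inequality $\chi(R) \geq -1$ deduced from $|T - R| = 1$, then rules out $\chi(R) = -1$ and the two-annulus configuration exactly as you do, so the substance is the same.
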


\begin{proof}
Let $R$ be a region of $D_K$. Since $D_K$ is connected, $|T - R| = 1$. Hence, $\chi(R) \geq -1$. If $\chi(R) = -1$, then $D_K$ is contained in a disc, hence we always can find a compressing disc of $T$ which does not intersect $D_K$.  But this violates the condition $r(D_K, T) \geq 2$. Lastly, if there exist two annular regions $R_1$ and $R_2$ of $D_K$, then $|T - (R_1 \cup R_2)| = 2$.  Again, since $D_K$ is connected, this is not possible.
\end{proof}

%\begin{proof}
%f there is a region $R$ of $T - G_K$ with $\chi(R) < -1$, then $|T - R| \geq 2$ by an Euler characteristic calculation.  Since $D_K$ is connected, this is not possible. If $\chi(R) = -1$, then either $|T - R| \geq 2$ or $T - R$ is a disc. As above, the former case is not possible. In the latter case, $D_K$ is in a disc, hence we always can find a compressing disc of $T$ which does not intersect $D_K$.  But this violates the condition $r(D_K, T) \geq 2$. Lastly, if there exist two annular regions $R_1$ and $R_2$ of $D_K$, then $|T - (R_1 \cup R_2)| = 2$.  Again, since $D_K$ is connected, this is not possible. Hence, every region is homeomorphic to a disc, exept possibly one annular region. 
%\end{proof}

\begin{lemma}\label{freeheegaard}
Suppose that a link $L$ has a checkerboard-colorable, connected diagram $D_L$ on a torus $T$ in $S^3$ such that $r(D_L, T) \geq 2$. Then $T$ is unknotted if and only if one of the checkerboard surfaces is free. 
\end{lemma}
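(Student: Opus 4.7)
The plan is to exploit the cellular structure of $T-G_L$ given by Lemma~\ref{complexity}: every region is a disk except possibly one annulus. Denote the two checkerboard surfaces of $D_L$ by $\bar{\Sigma}$ and $\bar{\Sigma}'$. The argument is essentially symmetric in the two surfaces, so I will work with a surface $\bar{\Sigma}$ whose coloring is chosen as convenient in each direction.

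For the forward direction, suppose $T$ is unknotted, so $S^3 = V_1 \cup_T V_2$ with both $V_i$ solid tori. Choose the coloring so that the annular region (if any) lies in $\bar{\Sigma}'$; then every region of $T$ belonging to $\bar{\Sigma}$ is a disk. I isotope every half-twisted band of $\bar{\Sigma}$ to lie in a collar of $T$ on the $V_1$-side, so that $\bar{\Sigma}$ sits in $\overline{V_1}$ with its disks on $\partial V_1 = T$. To prove $M := S^3 \setminus N(\bar{\Sigma})$ is a handlebody, I will exhibit a deformation retract of $M$ onto a connected graph. The piece $V_2 \subset M$ retracts to its core circle; the piece $V_1 \setminus N(\bar{\Sigma})$ retracts, using that each disk of $\bar{\Sigma} \cap T$ has null-homotopic boundary and the bands lie in a collar of $T$, onto a graph consisting of the core of $V_1$ together with one arc per complementary region. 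Gluing these retracts along $T \setminus \bar{\Sigma}$ produces a connected graph, so $M$ is a handlebody.

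For the reverse direction, suppose $\bar{\Sigma}$ is free, so $H := S^3 \setminus N(\bar{\Sigma})$ is a handlebody. The intersection $T \cap H$ is (up to isotopy) the closed sub-surface of $T$ given by the $\bar{\Sigma}'$-colored regions: a disjoint union of disks, plus possibly one annulus. These properly embedded 2-manifolds separate $H$ into $V_1 \cap H$ and $V_2 \cap H$. The hypothesis $r(D_L, T) \geq 2$ forces the annulus (when present) to be incompressible in $H$, because a compressing disk would restrict to a curve on $T$ meeting $G_L$ fewer than twice. Hence cutting the handlebody $H$ along these surfaces yields handlebody pieces. Reattaching $V_i \cap N(\bar{\Sigma})$, itself a handlebody built from $\bar{\Sigma}$-regions on $T$ together with bands on the $V_i$-side, realizes each $V_i$ as a union of two handlebodies glued along a subsurface. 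Since $\partial V_i = T$ is a torus and a meridional disk of the reattached handle piece compresses $T$ inside $V_i$, each $V_i$ is a solid torus, proving $T$ is unknotted.

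The key obstacle in both directions is controlling the half-twisted bands and the possibly annular region when analyzing $N(\bar{\Sigma})$ and its complement. In the forward direction, the explicit deformation retract requires isotoping band cores into $T$ and verifying that the resulting graph is genuinely a strong deformation retract, not merely a homotopy equivalent. In the reverse direction, $r(D_L, T) \geq 2$ provides the essential leverage: without it, a compressing disk for the annulus in $H$ could obstruct the cut pieces being handlebodies, and the conclusion that $V_i$ is a solid torus would fail.
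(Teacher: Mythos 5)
Your forward direction breaks at the annular region, and it does so because you chose the wrong surface to prove free. With the annular region placed in $\bar{\Sigma}'$, the complement $S^3 \setminus N(\bar{\Sigma})$ is obtained from the two solid tori $V_1, V_2$ by joining them along the regions of $\bar{\Sigma}'$: a $1$-handle for each disc region, but a \emph{thickened annulus} for the annular region. Your sentence ``gluing these retracts along $T\setminus\bar{\Sigma}$ produces a connected graph'' silently treats that annulus as if it were a disc. In fact the annulus contributes an amalgamation of $\pi_1(V_1)$ and $\pi_1(V_2)$ over $\mathbb{Z}$ along its core, a $(p,q)$-curve on $T$; when $|p|,|q|\ge 2$ the resulting group is of torus-knot type, not free, so $S^3\setminus N(\bar{\Sigma})$ need not be a handlebody. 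The lemma only claims that \emph{one} checkerboard surface is free, and it is the other one: the paper shows that the surface $\Sigma$ whose complementary regions (the $\Sigma'$-coloured ones) are all discs has $\overline{S^3-\Sigma}$ equal to the two solid tori joined by $1$-handles only, hence a handlebody. So the annular region must be placed in the surface you are proving free, not in its complement.

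The reverse direction also has a gap at the last step. After cutting the handlebody $H$ along the discs and the (incompressible) annulus and reattaching $V_i\cap N(\bar{\Sigma})$, you assert that each $V_i$, being ``a union of two handlebodies glued along a subsurface,'' is a solid torus because ``a meridional disk of the reattached handle piece compresses $T$ inside $V_i$.'' Neither claim is justified: a union of two handlebodies along a subsurface of their boundaries need not be a handlebody, and a meridian disc of the reattached piece need not be properly embedded in $V_i$ with essential boundary on $T$. The paper avoids this by arguing the contrapositive: if $T$ is knotted, compress $\overline{S^3-\Sigma}$ along the cocores of the $1$-handles (the disc regions of $\Sigma'$) to recover the two components of $S^3-T$, one of which is not a solid torus; since compressing a handlebody always yields handlebodies, $\Sigma$ is not free. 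For $\Sigma'$ one compresses along the disc regions of $\Sigma$ and is left with the exterior of the core of the annular region, a non-trivial knot exterior. You would need an argument of comparable force to replace your final paragraph. (Your incompressibility claim for the annulus also needs an innermost-disc argument to push a putative compressing disc off the disc regions of $T\cap H$ before $r(D_L,T)\ge 2$ can be invoked, but that part is repairable.)
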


\begin{proof}
From Lemma \ref{complexity}, every region is homeomorphic to a disc except possibly one region, which is homeomorphic to an annulus. Let $\Sigma$ and $\Sigma'$ be two checkerboard surfaces of $D_L$. Suppose that $\Sigma'$ is a checkerboard surface which consists with only disc regions. Since $\Sigma' - (\Sigma\cap\Sigma')$ is a set of disjoint discs and $S^3 - T$ has two connected components, $S^3 - \Sigma$ is homeomorphic to a $3$-manifold obtained from connecting the two components of $S^3 - T$ with $3$-dimensional $1$-handles, each corresponding to a disc of $\Sigma' - (\Sigma\cap\Sigma')$ (see Figure \ref{1-handle}). If $\Sigma$ contains an annular region of $D_L$, then $S^3 - \Sigma'$ can be obtained similarly, except we connect two components with a thickened annulus.  

\begin{figure}[h]
\centering
\includegraphics[scale=0.2]{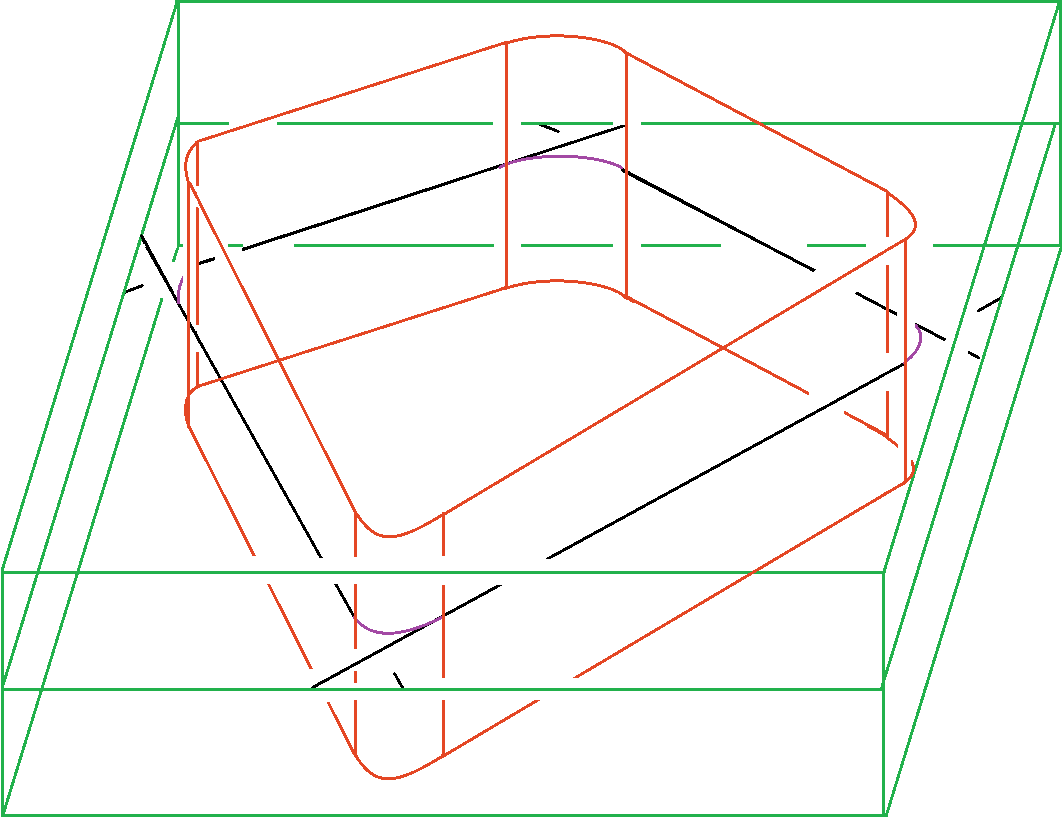}
\caption{A $1$-handle correspond to a disc region of $D_L$.}
\label{1-handle} 
\end{figure}

If $T$ is an unknotted torus, then $S^3 - T$ is a disjoint union of two solid tori. If we connect two solid tori with several $3$-dimensional $1$-handles, then it is still a handlebody. Hence, $\Sigma$ is free.

Conversely, suppose that $T$ is knotted. We show that both checkerboard surfaces are not free. We use the fact that compressing a handlebody with a disjoint set of compressing discs yields a disjoint union of handlebodies. First, we show that $\Sigma$ is not free. We can obtain $S^3 - \Sigma$ from $S^3 - T$ as above. If we compress $S^3 - \Sigma$ along all compressing discs, each corresponding to a disc of $\Sigma' - (\Sigma\cap\Sigma')$, then we get a solid torus and a $3$-manifold with boundary, which is not a solid torus, because $T$ is knotted. Hence $\Sigma$ is not free. Lastly, we show that $\Sigma'$ is not free. Consider $S^3 - \Sigma'$ and compress this manifold along all compressing discs each corresponding to a disc of $\Sigma - (\Sigma\cap\Sigma')$. Then we get a $3$-manifold which is homeomorphic to a knot exterior, such that the knot is isotopic to a core of the annular region of $D_L$. $T$ is knotted, so the core of the annular region is a non-trivial knot. So, the resulting $3$-manifold is not a handlebody. Hence, $\Sigma'$ is not free. 
\end{proof}

Now we can complete the proof of Theorem \ref{main}. Consider the almost-projection surface of $\Sigma$ and $\Sigma'$.
Suppose that the almost-projection surface is an immersed torus. If we surger the almost-projection surface along a disc, the surface might become disconnected or its genus will decrease. If the surgery reduces the genus, then we get an alternating knot by \cite{Howie2, Howie}.  As mentioned above, using the Turaev surface, such a knot is toroidally alternating. We continue performing the surgery, cutting off spheres until we get an alternating diagram $D_K$ on an embedded torus. 
As discussed above, since $D_K$ is checkerboard colorable, $r(D_K, T)$ is even. If $r(D_K, T) = 0$, then $K$ is an alternating knot. Suppose $r(D_K, T) \geq 2$. During the surgery, we cut off spheres, so the resulting checkerboard surfaces of $D_K$ on $T$ are isotopic to $\Sigma$ and $\Sigma'$. By assumption, one of them is free, hence, $T$ is unknotted by Lemma \ref{freeheegaard}.

%If the surgery disconnects the surface, then $G_K$ is on the sphere or on the torus. If it is on the sphere, then we get alternating knot. Otherwise, we continue the procedure. Suppose that we get an alternating diagram $D_K$ on the embedded torus $T$, after removing all intersections. 

On the other hand, suppose that the almost-projection surface is an immersed Klein bottle.
From the proof above, we can find an embedded M\"obius band, $f(M \cup A)$. Every region of $D_K$ on $f(M \cup A)$ is a disc except one annular region in $\Sigma'$ and one M\"obius band region in $\Sigma$. Consider the regular neighborhood of $\Sigma$ in $S^3$ as the following: 

We first thicken $f(M \cup A)$, and remove every thickened region of $D_K$ on $f(M \cup A)$ that is a subset of $\Sigma'$. The resulting manifold is homeomorphic to a regular neighborhood of $\Sigma$. Now, we compress the complement of $\Sigma$ by filling each thickened disc region of $\Sigma'$. Then since $\Sigma$ is free, the resulting complementary region is still a handlebody. We recover the complement of $f(M\cup A)$, so this handlebody is a solid torus. This implies that the core of the M\"obius band is unknotted in $S^3$, which implies that the solid torus that we obtained in Case \ref{t1ac2} is unknotted. 
This completes the proof of Theorem \ref{main}.
\end{proof}

\begin{definition}\cite{Howie}\label{bigon}
Let $\Sigma$ and $\Sigma'$ be properly embedded surfaces in general position in $E(K)$. A \textbf{bigon} is a disc $\mathcal{B}$ embedded in $E(K)$ such that $\partial \mathcal{B} = \beta \cup \beta'$, where $\beta \subset \Sigma$ and $\beta' \subset \Sigma'$ are connected arcs, $\beta \cap \beta'$ consists of two distinct points of $\Sigma \cup \Sigma'$ and $\mathcal{B} \cap (\Sigma \cup \Sigma') = \partial \mathcal{B}$. The arcs $\beta$ and $\beta'$ are called \textbf{edges} of $\mathcal B$, and $\beta \cap \beta'$ are called \textbf{vertices} of $\mathcal B$. A bigon is \textbf{inessential} if it can be homotoped to an intersection arc or an intersection loop of $\Sigma$ and $\Sigma'$. Otherwise, it is \textbf{essential}.    
\end{definition}
Here, the homotopy must be such that restricted to the boundary of $\mathcal{B}$, $\beta$ and $\beta'$ must remain in $\Sigma$ and $\Sigma'$, respectively, throughout the homotopy.

%Note that the definition of an inessential bigon here is slightly stronger than the original definition in \cite{Howie}.

Let $\Sigma$ and $\Sigma'$ be a pair of spannig surfaces in $E(K)$. A {\em minimal representative} of a simple loop $\gamma$ in $\Sigma$ is a simple loop in $\Sigma$ which is isotopic to $\gamma$ and intersects $\Sigma'$ minimally. We can define a minimal representative of a simple loop in $\Sigma'$ in the same manner.  

\begin{definition}
Let $\Sigma$ and $\Sigma'$ be a pair of spanning surfaces in $E(K)$. Then $\Sigma$ and $\Sigma'$ are \textbf{relatively separable} if there exists an essential $2$-sided simple loop $\gamma$ in $\Sigma$ or $\Sigma'$ such that its push-off $\gamma'$ does not intersect the other spanning surface. Also, we call such $\gamma$ is \textbf{liftable}. In this case, $\gamma$ is \textbf{incident to a bigon} if for every minimal representative of $\gamma$, there exists an essential bigon whose boundary intersects $\gamma$ transversely in one point.
\end{definition}

\begin{figure}[h]
\centering
\includegraphics[scale=0.4]{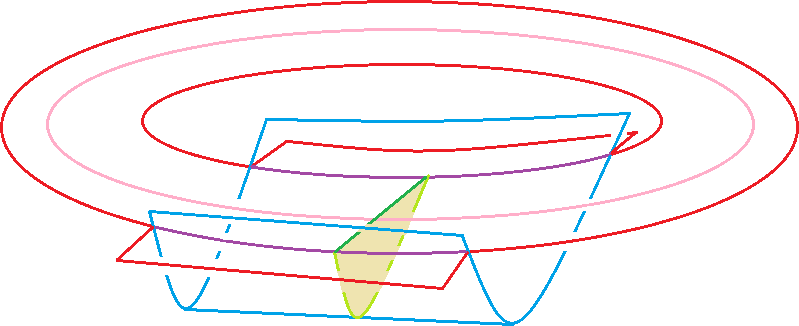}
\caption{A liftable curve which is incident to a bigon.}
\label{incident to a bigon} 
\end{figure}

\begin{definition}
Let $\Sigma$ and $\Sigma'$ be spanning surfaces in $E(K)$. We say that $\Sigma$ and $\Sigma'$ are \textbf{essentially intersecting} if their boundaries intersects minimally on $\partial E(K)$ and every intersection loop is essential on both surfaces.
\end{definition}

Since by Proposition \ref{alttor}, every alternating knot is toroidally alternating, we only consider non-alternating knots below.

\begin{theorem}\label{toroidal}
A non-alternating knot $K$ is toroidally alternating if and only if there exists a pair of essentially intersecting connected, free spanning spanning surfaces $\Sigma$ and $\Sigma'$ in the knot exterior which satisfy the following:
\begin{enumerate}
	\item $\chi(\Sigma) + \chi(\Sigma') + \frac{1}{2} i(\partial \Sigma, \partial \Sigma') = 0$.
	\item If $\Sigma$ and $\Sigma'$ are relatively separable, then every liftable curve is incident to a bigon.
	\end{enumerate}
\end{theorem}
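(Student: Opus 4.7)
My plan is to prove the two directions of the biconditional separately. The forward direction is a direct verification using the checkerboard surfaces of a toroidally alternating diagram, while the backward direction combines Theorem \ref{main} with a bigon-driven modification.

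For the forward direction, assume $K$ is toroidally alternating and fix a reduced, cellularly embedded alternating diagram $D_K$ on an unknotted torus $T$. I would take $\Sigma$ and $\Sigma'$ to be the two checkerboard surfaces, intersected with $E(K)$. Connectedness follows from connectedness of $D_K$, and since $\Sigma \cap \Sigma'$ in $E(K)$ consists only of the standard arcs at crossings, there are no intersection loops, so essentially intersecting reduces to minimality of $i(\partial \Sigma, \partial \Sigma')$, which holds for a reduced alternating diagram. Cellularity makes every region a disc for both colors, so Lemma \ref{freeheegaard} applies with either surface in the role of the disc-only checkerboard, giving freeness of both. Condition (1) is then a direct Euler-characteristic count using $\chi(T) = 0$, $\chi(\Sigma) = f_\Sigma - c$, $\chi(\Sigma') = f_{\Sigma'} - c$, $\tfrac{1}{2} i(\partial \Sigma, \partial \Sigma') = c$, and $f_\Sigma + f_{\Sigma'} = c$, where $c$ is the crossing number and $f_\Sigma, f_{\Sigma'}$ count faces of each color. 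For condition (2), I would establish the stronger statement that the pair is \emph{not} relatively separable: any essential simple loop in a checkerboard surface must pass through at least one band because every face is a disc, and at such a band the push-off on either side of $\Sigma$ enters a face of $\Sigma'$, so no such loop is liftable.

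For the backward direction, suppose $\Sigma, \Sigma'$ satisfy the hypotheses. First apply Theorem \ref{main} to obtain an alternating diagram $D_K$ on an unknotted torus $T$ with $r(D_K, T) \geq 2$; since the construction only cuts off spheres, the resulting checkerboard surfaces are ambient-isotopic to $\Sigma$ and $\Sigma'$. By Lemma \ref{complexity}, every region of $D_K$ is a disc except possibly one annular region $R$. If there is no annular region, $D_K$ is cellularly embedded and $K$ is toroidally alternating. Otherwise the core $\gamma$ of $R$ lies in the checkerboard surface containing $R$; its push-off across $R$ avoids the other surface, and a short check using $r(D_K, T) \geq 2$ confirms that $\gamma$ is essential in that surface. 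Hence $\Sigma, \Sigma'$ are relatively separable, and condition (2) supplies an essential bigon $\mathcal{B}$ whose boundary meets a minimal representative of $\gamma$ transversely in one point. I would use $\mathcal{B}$ as a guiding disc for a flype-like isotopy of $D_K$ on $T$ that cancels the two crossings at the vertices of $\mathcal{B}$ against $R$, eliminating the annular region while preserving alternation on the unknotted torus and condition (1) — the induced changes in $\chi(\Sigma) + \chi(\Sigma')$ and in $i(\partial \Sigma, \partial \Sigma')$ cancel by construction of the bigon. Iterating on any further annular region produces a cellularly embedded alternating diagram, so $K$ is toroidally alternating.

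The main obstacle is making this bigon-driven move precise and verifying that it stays inside the class handled by Theorem \ref{main} while strictly reducing the number of annular faces; one must also rule out the creation of a new liftable curve that fails condition (2). A more economical alternative, which I suspect the author may take, is to invoke Theorem \ref{almostalternating}: the relatively separable case with every liftable curve incident to a bigon forces $K$ to be almost-alternating, and then Proposition \ref{alttor} together with Adams \cite{Adams1} gives directly that $K$ is toroidally alternating, closing the argument without an explicit surgery step.
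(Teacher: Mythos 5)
Your forward direction is essentially sound and in fact proves something slightly stronger than the paper does: you argue that the checkerboard surfaces of a cellularly embedded alternating diagram are never relatively separable (so condition (2) holds vacuously), whereas the paper keeps the antecedent alive and constructs, for any minimal representative of a liftable curve, an essential bigon from an innermost intersection arc in the annulus between the curve and its push-off. Your route is consistent with the paper's Lemma \ref{nonrelative}, and your verification of condition (1) and of freeness via Lemma \ref{freeheegaard} matches what the paper leaves implicit.

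The backward direction, however, has genuine gaps. First, your assertion that the diagram produced by Theorem \ref{main} has checkerboard surfaces ambient-isotopic to $\Sigma$ and $\Sigma'$ fails when the almost-projection surface is an immersed Klein bottle: there the torus $T$ is the boundary of a thickened M\"obius band and the diagram is a lift of only part of the configuration; as the introduction notes, it need not even be checkerboard colorable, so the core of the annular region cannot be read as a liftable curve on $\Sigma$ or $\Sigma'$. The paper must (and does) treat this case separately, showing that condition (2) applied to a curve adjacent to the essential self-intersection loop forces $K$ to be alternating, contradicting the hypothesis. Second, the ``flype-like isotopy'' that is supposed to cancel two crossings against the annular region and produce a cellularly embedded diagram is exactly the step you cannot make precise, and it is not how the argument goes: the paper uses the essential bigon to produce a compressing disc of $T$ meeting $G_K$ transversely twice, deduces that $K$ is a cycle of two alternating $2$-tangles in the sense of \cite{Kim}, and concludes toroidal alternation from the fact that such knots have Turaev genus one --- no cellularly embedded diagram on the original torus is ever produced. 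Finally, your proposed shortcut through Theorem \ref{almostalternating} is circular as the paper is organized, since the proof of that theorem explicitly invokes the proof of Theorem \ref{toroidal} (both for the Klein bottle exclusion and for the compressing-disc step).
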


\begin{remark}
\rm In \cite[Figure 3.18]{Howie}, Howie gave an example of a weakly generalized alternating projection of the knot $10_{139}$, for which one of the regions is homeomorphic to an annulus. He showed that there is no essential bigon between the two checkerboard surfaces $\Sigma$ and $\Sigma'$. Hence, this pair $\Sigma$ and $\Sigma'$ are an example of a pair of essentially intersecting free spanning surfaces which is relatively separable, but not every liftable curve is incident to a bigon. 
\end{remark}

\begin{proof}
First, we show that if the two checkerboard surfaces $\Sigma$ and $\Sigma'$ of a toroidally alternating diagram are relatively separable, then every liftable loop is incident to a bigon. Let $\gamma$ be a minimal representative of a liftable loop on $\Sigma$. Consider the push-off $\gamma'$ of $\gamma$. Let $\mathcal{A}$ be the annulus bounded by $\gamma$ and $\gamma'$ such that $\mathcal{A} \cap \Sigma = \gamma$. Every essential loop of $\Sigma$ intersects $\Sigma'$, so $\mathcal{A}$ intersects with $\Sigma'$. Then every intersection of $\Sigma'$ and $\mathcal{A}$ is either an arc which has its endpoints on $\gamma$ or a simple loop isotopic to $\gamma$. We can modify $\gamma'$ so that $\mathcal{A}$ only intersects $\Sigma'$ in arcs. Consider an innermost bigon $\mathcal{B}$ in $\mathcal{A}$ bounded by $\gamma$ and one of the intersection arcs. Then $\mathcal{B}$ is an essential bigon, because if $\mathcal{B}$ is inessential, we can isotope $\gamma$ and $\mathcal{A}$ to remove the intersection arc, which contradicts our hypothesis, $\gamma$ is a minimal representative. Then we can slightly isotope this bigon to intersect $\gamma$ in one point. Hence, every liftable curve is incident to a bigon. This completes the proof of the ``only if" part of Theorem \ref{toroidal}.

Now to show the ``if" part, since $\Sigma$ and $\Sigma'$ are essentially intersecting, we can contract every standard arc to get an almost-projection surface. Below, let $F$ denote the almost-projection surface of $\Sigma$ and $\Sigma'$. As we showed in the proof of Theorem \ref{main}, $F$ is either an unknotted torus or an immersed Klein bottle with no $2$-sided, non-separating self-intersection loop. 

First, suppose that $\Sigma$ and $\Sigma'$ are not relatively separable. We will show that $F$ is an unknotted torus and the alternating diagram on the almost-projection surface is cellularly embedded. 

\begin{lemma}\label{nonrelative}
Let $\Sigma$ and $\Sigma'$ be essentially intersecting spanning surfaces of a knot $K$ which are not relatively separable. Then $\Sigma$ and $\Sigma'$ are checkerboard surfaces of a cellularly embedded alternating diagram on a closed orientable surface $F$ with Euler characteristic $\chi(F) = \chi(\Sigma) + \chi(\Sigma') + \frac{1}{2} i(\partial \Sigma, \partial \Sigma')$. 
\end{lemma}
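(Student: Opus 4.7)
The plan is to verify that the almost-projection surface $F$ of $\Sigma$ and $\Sigma'$ is itself an embedded closed orientable surface carrying a cellularly embedded alternating diagram whose checkerboard surfaces are $\Sigma$ and $\Sigma'$. The Euler-characteristic formula follows from a direct CW argument: writing $c = \frac{1}{2} i(\partial\Sigma, \partial\Sigma')$, the subspace $\bar\Sigma \cap \bar\Sigma' \subset S^3$ is the circle $K$ with $c$ properly embedded arcs attached at their endpoints, so $\chi(\bar\Sigma \cap \bar\Sigma') = -c$; inclusion-exclusion gives $\chi(\bar\Sigma \cup \bar\Sigma') = \chi(\Sigma) + \chi(\Sigma') + c$, and contracting each standard arc to a point preserves Euler characteristic.

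Next I would show that the immersion $F \to S^3$ has no self-intersection loops. A self-intersection loop $\sigma$ of $F$ is necessarily an intersection loop of $\Sigma \cap \Sigma'$, and the embeddedness of $\Sigma$ and $\Sigma'$ forces one preimage of $\sigma$ in the abstract surface to lie in a region $R_1 \subset \Sigma$ and the other in a region $R_2 \subset \Sigma'$. By the essentially intersecting hypothesis, $\sigma$ is essential in both $\Sigma$ and $\Sigma'$. When $\sigma$ is 2-sided in $\Sigma$, a small parallel push-off $\gamma$ of $\sigma$ within $R_1$ is a 2-sided essential simple loop in $\Sigma$ whose normal push-off into $S^3 \setminus \Sigma$ misses $\Sigma'$, since $\gamma$ is disjoint from $\Sigma \cap \Sigma'$ and outside the intersection the two surfaces are a positive distance apart. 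When $\sigma$ is 1-sided in $\Sigma$, the same argument applies with $\gamma$ replaced by the boundary of a regular neighborhood of $\sigma$ in $\Sigma$. Either way $\gamma$ is liftable, contradicting non-relative-separability. Hence $F$ is embedded, and since no non-orientable closed surface embeds in $S^3$, $F$ is closed orientable.

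To show cellular embedding, suppose for contradiction that some region $R$ of $F - G_K$ is not a disc, with $R \subset \Sigma$ without loss of generality. Then $R$ contains a 2-sided essential simple loop $\gamma$ in its interior (take the core of an annular sub-region, or the boundary of a regular neighborhood of a 1-sided essential loop if $R$ is non-orientable). Because $\gamma$ is disjoint from $G_K$ and $F$ is embedded, $\gamma$ is disjoint from $\Sigma \cap \Sigma'$, so any small normal push-off of $\gamma$ off $\Sigma$ lies in $S^3 \setminus F$ and therefore avoids $\Sigma' \subset F$. The crux is verifying that $\gamma$ is essential in $\Sigma$: if $\gamma$ bounded a disc $D \subset \Sigma$, then $D$ must extend through crossings of $G_K$ out of $R$, and combining $D$ with the sub-annulus of $R$ between $\gamma$ and one boundary component $c_1$ of $R$ yields a disc in $F$ bounded by $c_1$; iterating this reduction with the minimality of $i(\partial\Sigma, \partial\Sigma')$, which forbids bigon-type simplifications of the diagram, produces a contradiction. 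Thus $\gamma$ is liftable, contradicting non-relative-separability, so every region must be a disc. The diagram is then alternating by the no-bigon argument from the proof of Theorem \ref{main}, and $\Sigma$ and $\Sigma'$ are the checkerboard surfaces by the very construction of the almost-projection surface.

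The main obstacle is the essentiality check in the last step: a loop essential within a non-disc region need not be essential in the ambient spanning surface, and confirming essentiality requires a careful combinatorial analysis of how sub-discs of $\Sigma$ piece together through the crossings of $G_K$.
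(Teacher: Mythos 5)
Your overall strategy coincides with the paper's: analyze the almost-projection surface $F$, rule out essential self-intersection loops by producing a liftable curve (a parallel push-off in the $2$-sided case, the boundary of a regular neighborhood in the $1$-sided case), conclude that $F$ is embedded and hence orientable, and then rule out non-disc regions of $F - G_K$ by producing a liftable curve inside such a region. Your Euler characteristic computation is fine and is essentially what the paper takes for granted when it asserts that $F$ is a torus or a Klein bottle. One small gloss: in the $1$-sided case the boundary of a regular neighborhood of $\sigma$ is not isotopic to $\sigma$, so its essentiality is not automatic; the paper gets it from the fact that otherwise one would obtain a closed real projective plane embedded in $S^3$, which is impossible.

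The step you flag as ``the main obstacle'' is, however, a genuine gap, and your sketched workaround (extending a hypothetical disc $D \subset \Sigma$ through the crossings and ``iterating with the minimality of $i(\partial\Sigma,\partial\Sigma')$'') does not work as written: minimality of the boundary intersection number forbids bigons between $\partial\Sigma$ and $\partial\Sigma'$, but it says nothing directly about a disc in $\Sigma$ bounded by a loop lying in the interior of a single region, and the claimed iteration is not specified. The paper closes this gap with a short parity argument. For a loop $\gamma$ in the spine (deformation retract graph) of a non-disc region $R \subset \Sigma$, one can find a properly embedded arc $\alpha$ in $R$, hence in $\Sigma$, with both endpoints on $K = \partial\bar\Sigma$, meeting $\gamma$ transversely in exactly one point (a dual arc to the loop of the spine). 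If $\gamma$ bounded a disc $D$ in $\Sigma$, then $D$ would be disjoint from $\partial\Sigma$, so every properly embedded arc of $\Sigma$ would meet $\gamma = \partial D$ in an even number of points; the single transverse intersection with $\alpha$ therefore shows $\gamma$ is essential in $\Sigma$, and the liftability contradiction goes through. Without this (or an equivalent) argument your proof is incomplete at its decisive step.
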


\begin{proof}
Consider the almost-projection surface $F$ of $\Sigma$ and $\Sigma'$. First, we show that $F$ cannot intersect itself in an essential simple loop. Suppose that there exists an essential simple loop intersection $\phi$. Then $\phi$ is either $1$-sided or $2$-sided. Consider one of the components $\varphi$ of the boundary of a regular neighborhood of $\phi$ on $\Sigma$. 

If $\phi$ is $2$-sided, then $\varphi$ is isotopic to $\phi$, hence essential. Furthermore, $\varphi$ has a push-off which does not intersect with $\Sigma'$. Hence, $\varphi$ is liftable, which contradicts the assumption that $\Sigma$ and $\Sigma'$ are not relatively separable.

If $\phi$ is $1$-sided, then $\varphi$ bounds a M\"obius band on $\Sigma$, which is a regular neighborhood of $\phi$ on $\Sigma$. If $\varphi$ bounds a disc on the other side, then we get a closed component, which is homeomorphic to a real projective plane. A real projective plane cannot be embedded in $S^3$, so the boundary does not bound a disc on $\Sigma$, which implies that $\varphi$ is essential. Furthermore, $\varphi$ has a push-off which does not intersect $\Sigma'$, so, it is liftable. The existence of a liftable curve contradicts the assumption that $\Sigma$ and $\Sigma'$ are not relatively separable. Hence, there cannot exist an essential simple loop intersection of $F$.

If $F$ is non-orientable, then it must have a self-intersection. However, we showed above that if $F$ has an essential self-intersection, then $\Sigma$ and $\Sigma'$ are relatively separable. Since this contradicts our hypothesis, $F$ is orientable.

We now show that the alternating diagram on $F$ is cellularly embedded. Suppose that there exists a region which is not homeomorphic to a disc. Without loss of generality, we can assume that this region is a subset of $\Sigma$. Consider the graph which is a deformation retract of this region. Any loop of this graph is an essential loop of $\Sigma$ because we can find an arc on $\Sigma$ which has both of its endpoints on $K$ and intersects this loop transversely once. Furthermore, we can find a push-off of this loop which does not intersect $\Sigma'$. Hence, it is a liftable curve, so $\Sigma$ and $\Sigma'$ are relatively separable, which contradicts our hypothesis. Therefore, the alternating diagram on $F$ is cellularly embedded. This completes the proof of Lemma \ref{nonrelative}.
\end{proof}

By Lemma \ref{nonrelative}, if $\Sigma$ and $\Sigma'$ are not relatively separable, then the almost-projection surface $F$ is an unknotted torus with a cellularly embedded alternating diagram.

Now, suppose that $\Sigma$ and $\Sigma'$ are relatively separable. If $F$ is an unknotted torus with a cellularly embedded alternating diagram, then we are done. Otherwise, by the proof of Theorem \ref{main}, we have two cases: either $F$ is an unknotted torus with a non-cellularly embedded alternating diagram, or $F$ is an immersed Klein bottle with no $2$-sided, non-separating self-intersection loop. We will show in the first case, $K$ is almost-alternating hence toroidally alternating, and that the second case is not possible. 

First, assume that $F$ is an unknotted torus with non-cellularly embedded alternating diagram. Let $\gamma$ be a core of the the annular region, which is a minimal representative of itself. We showed above that $\gamma$ is liftable. We can assume that $\gamma$ is on $\Sigma$. Now, we show that if $\gamma$ is incident to a bigon, then $K$ is almost-alternating. By assumption, there is an essential bigon $\mathcal{B}$ which intersects $\gamma$ transversely once on its boundary $\partial \mathcal{B}$. After contracting standard arc intersections to get $F$, bigon $\mathcal{B}$ becomes a disc whose interior is embedded in the complement of $F$, and $\partial \mathcal{B}$ is a loop on $F$ which intersects $G_K$ only in its vertices. The loop $\partial \mathcal{B}$ is simple, whenever both vertices of $\partial \mathcal{B}$ are on different standard arc intersections of $\Sigma$ and $\Sigma'$. If $\partial \mathcal{B}$ is simple, we can modify $\partial \mathcal{B}$ to intersect $G_K$ transversely twice on edges of $G_K$. Otherwise, $\partial \mathcal{B}$ on $F$ is a loop which has one self-intersection on some vertex of $G_K$. However, the interior of $\mathcal{B}$ does not intersect itself, so we can modify $\partial \mathcal{B}$ to be a simple loop, and intersect $G_K$ transversely twice on its edges. Since $\partial \mathcal{B}$ intersects $\gamma$ transversely once, $\mathcal {B}$ on $F$ is essential, so $\mathcal{B}$ is a compressing disc of $F$.(See Figure \ref{meridian}). This implies that $K$ can be obtained from taking $n$-full twists on two strands of some alternating knot diagram. This operation yields either an alternating knot diagram or a cycle of two alternating $2$-tangles, as defined in \cite{Kim}. By assumption, $K$ is not alternating, so it is a cycle of two alternating $2$-tangles. Then by \cite{Kim}, its Turaev genus is one, so $K$ is toroidally alternating. \\

%\begin{figure}[h]
%\centering
%\includegraphics[scale=0.25]{}
%\caption{Isotope surfaces along paired essential bigon.}
%\label{isotopealongbigon} 
%\end{figure}
\begin{figure}[h]
\centering
\includegraphics[scale=0.25]{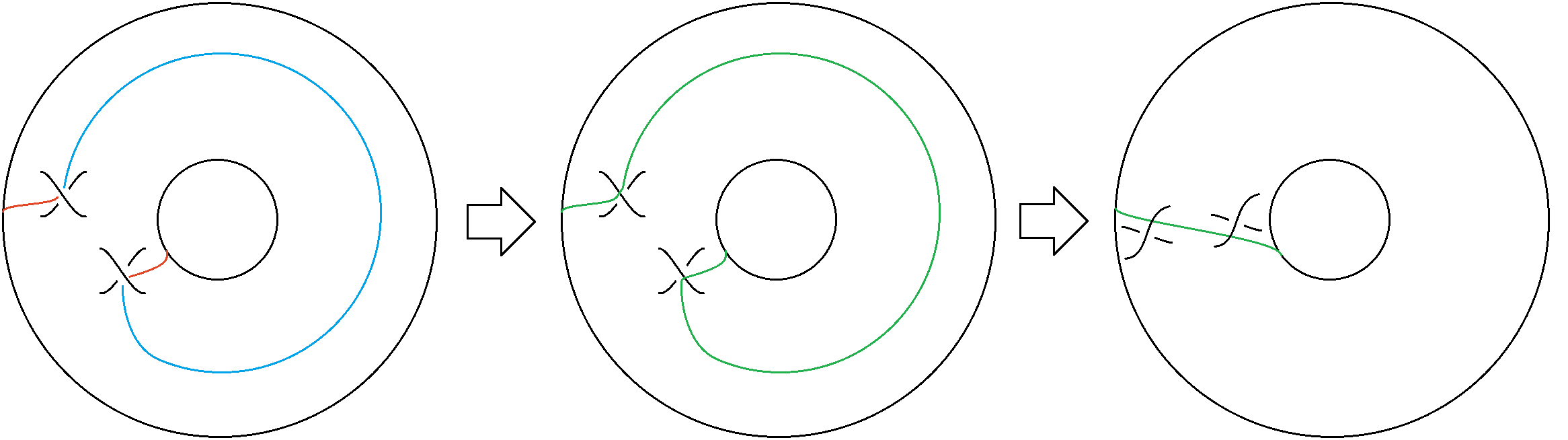}
\caption{Left : Red and blue arcs are edges of bigon. Middle : We can find an essential arc which intersects $D_K$ twice. Right : We can isotope $D_K$ so that the essential arc is on the boundary of a compressing disc.}
\label{meridian} 
\end{figure}

The remaining case is when $F$ is a Klein bottle.
We showed before that in this case, $\Sigma$ and $\Sigma'$ are relatively separable. We will show that if every liftable curve is incident to a bigon, then $K$ is alternating.
% It is impossible for a Klein bottle to be embedded in $S^3$, hence, there exists a self-intersection loop of $F$. In the proof of Theorem \ref{main}, we show that there is no $2$-sided, non-separating self-intersection loop. Then each self-intersecion loop is either $1$-sided or $2$-sided and separating. If we have $2$-sided, separating self-intersection loops, then we can smoothe every such intersection as in Figure \ref{geometricsum}, such that the resulting surface is connected. Since smoothing does not change the orientability of the alomst-projection surface, the resulting surface is still a Klein bottle. Since the Klein bottle cannot be embedded, some self-intersection loop is must be $1$-sided. Furthermore, $1$-sided self-intersection loop should be unique because the complement of the $1$-sided self-intersection loop is an annulus. 

%Consider other self-intersection loops, which are $2$-sided and separating. Then we can smooth self-intersection loops as in Figure \ref{geometricsum} in such a way that the resutling surface is connected. After smoothing all self-intersections, we get embedded Klein bottle, which is not possible because Klein bottle cannot be embedded in $S^3$. Hence, there exists a unique $1$-sided self-intersection loop.

%\begin{figure}[h]
%\centering
%\includegraphics[scale=0.3]{}
%\caption{Self-intersections can be smoothed in two different ways.}
%\label{geometricsum} 
%\end{figure}
As we discussed above, every self-intersection loop of $F$ is either a $2$-sided separating loop or a $1$-sided loop. Suppose that there exists a $2$-sided self-intersection loop. Consider one of the $2$-sided self-intersection loops $\gamma'$ on $\Sigma$, which is adjacent to $G_K$ on the almost-projection surface. Consider a simple loop $\gamma$ on $\Sigma$ which is on the region between $\gamma'$ and $G_K$ on the almost-projection surface, and isotopic to $\gamma'$. We showed above that $\gamma$ is liftable. Furthermore, it is a minimal representative of itself. Consider a bigon $\mathcal{B}$ which is incident to $\gamma$. Let $\beta$ be the edge of $\mathcal{B}$ which is on $\Sigma$ and intersects $\gamma$ transversely once. Since $\gamma$ is adjacent to $G_K$, one of the vertices is on $\gamma'$ and the other is on some standard arc intersection as in Figure \ref{essentialbigon2}. This implies that there exists a properly embedded essential arc $\tau$ on the annulus which contains $G_K$, which intersects $G_K$ once.

%$\partial B$ transversely intersects $\gamma'$ once because it intersects $\gamma$ transversely once, which is isotopic to $\gamma'$. $\partial B$ intersects intersections of $\Sigma$ and $\Sigma'$ only in vertices of itself. Hence, one of the vertices is on $\gamma'$ and the other is on the standard arc intersection between $\Sigma$ and $\Sigma'$. This implies that there exists a properly embedded essential arc on the $f(A)$, which intersect the $G_K$ once, as in Figure \ref{essentialbigon2}.

\begin{figure}[h]
\centering
\subfloat[$\Sigma$ and $\Sigma'$ near $K$ and a liftable curve $\gamma$, which is incident to a bigon $\mathcal{B}$.\label{essentialbigon}]{\includegraphics[width=0.45\textwidth]{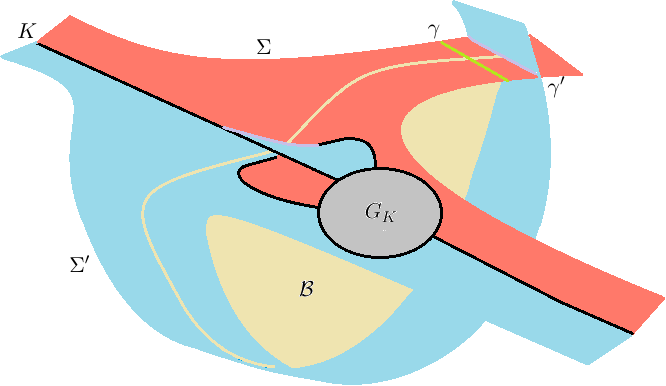}}\hfill
\subfloat[One of the vertices of $\partial \mathcal{B}$ is on $\gamma'$ and the other is on a standard arc. Then we can find $\tau$ on the annulus which contains $G_K$, which intersects $G_K$ once.\label{essentialbigon2}]{\includegraphics[width=0.45\textwidth]{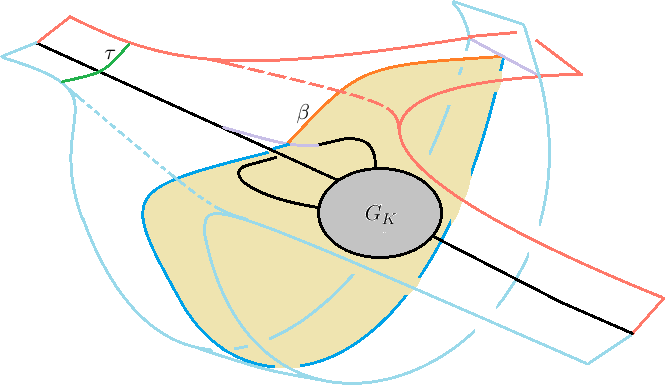}}
\caption{}
 \end{figure}

 Hence, $K$ is a connected sum of an alternating knot and the other knot, which is isotopic to a core of the annulus which contains $G_K$. In the final step of the proof of Theorem \ref{main}, we showed that when the almost-projection surface is a Klein bottle, the core of $f(M \cup A)$ is unknotted. Hence, the core of $\partial f(A)$ is a torus knot type $(2, 2q+1)$, $q \geq 0$ in $S^3$, which is alternating. Therefore, $K$ is alternating. 

Instead, suppose that there is no $2$-sided intersection loop. Then there exists a $1$-sided self-intersection loop, $\eta'$. Note that there is no other $1$-sided self-intersection loop because the complement of $\eta'$ is an annulus, so every other loop is $2$-sided. In the proof of Lemma \ref{nonrelative}, we showed that the boundary $\eta$ of a regular neighborhood of $\eta'$ on $\Sigma$ is liftable and by construction, $\eta$ is a minimal representative of itself. Consider a bigon $\mathcal{B}$ incident to $\eta$. By the same argument, $\partial \mathcal{B}$ has one vertex on $\eta'$, and the other vertex on the standard arc intersection. By the same argument as above, $K$ is alternating. 

This completes the proof of Theorem \ref{toroidal}.

\end{proof}

\begin{theorem}\label{almostalternating}
A non-alternating knot $K$ is almost-alternating if and only if there exists a pair of essentially intersecting connected, free spanning surfaces $\Sigma$ and $\Sigma'$ in the knot exterior which satisfy the following:
\begin{enumerate}
\item $\chi(\Sigma) + \chi(\Sigma') + \frac{1}{2} i(\partial \Sigma, \partial \Sigma') = 0$.  
\item $\Sigma$ and $\Sigma'$ are relatively separable and every liftable curve is incident to a bigon,
\end{enumerate}
\end{theorem}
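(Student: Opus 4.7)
The plan is to prove the two directions separately. For the forward direction (``only if''), I will construct the required spanning surfaces from an almost-alternating diagram via Adams's construction. For the backward direction (``if''), I will use the structural analysis established in the proof of Theorem \ref{toroidal} to extract an almost-alternating diagram.

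For the forward direction, suppose $K$ is almost-alternating with diagram $D$ having dealternator $c$. Following Adams \cite{Adams1}, I would replace a neighborhood of $c$ with a handle to obtain a non-cellularly embedded toroidally alternating diagram $D'$ on an unknotted torus $T$ with exactly one annular region. Taking $\Sigma$ and $\Sigma'$ to be the two checkerboard surfaces of $D'$, I would verify connectedness and the essentially intersecting property from the standard alternating-diagram structure; the Euler characteristic equation (1) from $\chi(T)=0$ together with the usual count of faces, edges, and vertices; and freeness of both surfaces from $T$ being unknotted, together with the core of the annular region being unknotted (since Adams's handle is attached locally), via Lemma \ref{freeheegaard}. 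For condition (2), the core $\gamma$ of the annular region is liftable by construction, and the essential bigon in $E(K)$ coming from the two strands that pass through the handle -- the topological remnant of the dealternator $c$ -- witnesses that $\gamma$ is incident to a bigon.

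For the backward direction, assume $\Sigma,\Sigma'$ are as in the hypothesis. I would apply the proof of Theorem \ref{toroidal} under condition (2). The Klein bottle case was shown there to force $K$ to be alternating, contradicting the hypothesis, so the almost-projection surface is an unknotted torus $T$ carrying a non-cellularly embedded alternating diagram $D_K$. The bigon incident to a liftable $\gamma$ then produces, as in the proof of Theorem \ref{toroidal}, a compressing disc $\mathcal{B}$ of $T$ whose boundary meets $G_K$ transversely in exactly two edges.

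From this data I would extract an almost-alternating diagram of $K$: the compressing disc $\mathcal{B}$ induces an isotopy that effectively compresses $T$ to a $2$-sphere, collapsing the annular region and merging the two edges across it into a single crossing on $S^2$. The result is a diagram consisting of the alternating diagram $D_K$ restricted to the annulus part of $T$ together with one new crossing which becomes the dealternator, giving an almost-alternating diagram whose single crossing change restores the alternating structure. The main obstacle will be proving that this collapsing operation produces exactly one bad crossing rather than several; this amounts to showing that the bigon together with its essentialness forces the ``width'' of the annular region to correspond to a single full twist (the $n=1$ case in the $n$-full twists structural result from the proof of Theorem \ref{toroidal}). Accomplishing this will require a careful geometric analysis of how the essential bigon interacts with the annular region and with the alternating combinatorics of $D_K$, essentially establishing that condition (2) is the precise topological manifestation of reversing Adams's one-handle construction.
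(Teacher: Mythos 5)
Your overall architecture matches the paper's (Adams-style construction for ``only if''; reduction via the structure from Theorem \ref{toroidal} for ``if''), but the backward direction has a genuine gap. After ruling out the Klein bottle, you assert that the almost-projection surface is an unknotted torus carrying a \emph{non-cellularly embedded} alternating diagram. That does not follow: relative separability does not force an annular region, since a liftable curve can live on a checkerboard surface of a cellularly embedded diagram. The paper must therefore treat the cellularly embedded case separately, and this is where most of the work lies: one takes the annulus $\mathcal{A}$ between a liftable curve $\gamma$ and its push-off, isotopes $\Sigma$ or $\Sigma'$ across the essential bigons cut out by $\mathcal{A}\cap\Sigma'$ to remove all intersections, and then must verify that after each such isotopy the surfaces remain essentially intersecting, the new almost-projection surface is still an embedded (not Klein bottle) unknotted torus, and $\gamma$ is still incident to an essential bigon. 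Only after this sequence of isotopies does one land in the non-cellularly embedded situation. Your proposal skips all of this.

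A second problem is your proposed endgame. You plan to show that the essential bigon forces the annular region to be a single full twist (``the $n=1$ case''), so that collapsing yields exactly one bad crossing. That is not true and not needed: the compressing disc meeting $G_K$ twice exhibits $K$ as a cycle of two alternating $2$-tangles (equivalently, as obtained by $n$ full twists on two strands of an alternating diagram for some $n$), and the conclusion that such a knot is almost-alternating is imported from Abe--Kishimoto (Proposition 6.6 of their paper on dealternating numbers), not derived by forcing $n=1$. Finally, in the forward direction you only verify bigon-incidence for the core of the annular region; the definition quantifies over \emph{every} liftable curve and every minimal representative, and the paper separately argues that any other liftable curve must cross a standard arc intersection, producing the required essential bigon from the annulus between the curve and its push-off.
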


\begin{proof}
We show the ``if" part first. In the proof of Theorem \ref{toroidal}, we showed that if $\Sigma$ and $\Sigma'$ are relatively separable and their almost-projection surface is a Klein bottle, then $K$ is alternating. Assuming that $K$ is non-alternating, the almost-projection surface of $\Sigma$ and $\Sigma'$ is an unknotted torus. Below, let $F$ denote the almost-projection surface of $\Sigma$ and $\Sigma'$. The alternating diagram $D_K$ on $F$ may be cellularly embedded or not, which we consider in separate cases.

Suppose that $D_K$ on $F$ is not cellulary embedded. Then from the proof of Theorem \ref{toroidal}, we can find an essential simple loop on $F$ which intersects $D_K$ twice and bounds a compressing disc. Hence, $K$ has a cycle of two alternating $2$-tangles, as defined in \cite{Kim}. Then by \cite[Proposition 6.6]{AbeKishimoto}, $K$ can be transformed into an almost alternating diagram. 
		
On the other hand, suppose that the diagram $D_K$ is cellularly embedded. Let $\gamma$ be a liftable curve on $\Sigma$. We will show that we can isotope $\Sigma$ and $\Sigma'$ so that after the isotopy, the new almost-projection surface of $\Sigma$ and $\Sigma'$ has an alternating diagram with an annular region whose core is $\gamma$. Let $\gamma'$ be a lift of $\gamma$, and $\mathcal{A}$ be an annulus bounded by $\gamma$ and $\gamma'$. 

Since $\gamma$ is on $\Sigma$ and $D_K$ is cellularly embedded, $\mathcal{A} \cap \Sigma' \neq \emptyset$. Furthermore, by the relatively separable condition, every intersection of $A$ and $\Sigma'$ is an arc which has both its endpoints on $\gamma$. The innermost intersection arc bounds a bigon $\mathcal{B}$. If $\mathcal{B}$ is inessential, we can isotope $\gamma$ and $\mathcal{A}$ to remove such an intersection. If $\mathcal{B}$ is essential, then we can isotope the surface along $\mathcal{B}$ to remove the intersection, as in Figure \ref{change}. After the isotopy, we get a new bigon $\mathcal{B}'$ as in Figure \ref{change}(right). 

Now, we show that after the isotopy, $\Sigma$ and $\Sigma'$ are still essentially intersecting, the new almost-projection surface of $\Sigma$ and $\Sigma'$ is still an embedded unknotted torus. Below, let $F'$ denote the new almost-projection surface of $\Sigma$ and $\Sigma'$ after the isotopy along $\mathcal{B}$.

First, we show that after the isotopy, $\Sigma$ and $\Sigma'$ are still essentially intersecting.	 Since this isotopy does not change $\Sigma$ and $\Sigma'$ near their boundaries, the number of arc intersections remains minimal. Therefore, if $\Sigma$ and $\Sigma'$ are not essentially intersecting after the isotopy, then there exists an inessential intersection loop. Furthermore, each isotopy can change the number of intersecting components at most once, so there is only one inessential intersection loop $\mu$. 

We will show that $\mu$ bounds a disc on both spanning surfaces. Since $\mu$ is inessential, it bounds a disc in one of the spanning surfaces, say $\Sigma$. If $\mu$ does not bound a disc in the other spanning surface, $\Sigma'$, then we can surger $\Sigma'$ along a disc bounded by $\mu$ on $\Sigma$. Let $\Sigma^*$ be the resulting spanning surface. Then the first condition implies that $\chi(\Sigma) + \chi(\Sigma^*) + \frac{1}{2} i(\partial \Sigma, \partial \Sigma^*) = 2$, so $K$ is alternating. As this contradicts the hypothesis that $K$ is non-alternating, $\mu$ bounds discs in both spanning surfaces.

If we undo the isotopy, then the intersection pattern of $\Sigma$ and $\Sigma'$ changes as in Figure \ref{change2}. More specifically, it changes from the right picture to the left picture. Then the edge of $\mathcal{B}$ is an arc on the left picture, which cobounds a disc with a subarc of $\Sigma \cap \Sigma'$. Then each edge can be isotoped onto $\Sigma \cap \Sigma'$, hence $\mathcal{B}$ is inessential. This contradicts the assumption that $\mathcal{B}$ is essential. Therefore, $\Sigma$ and $\Sigma'$ are essentially intersecting after the isotopy along $\mathcal{B}$.

Now, we show that $F'$ is an embedded torus. Suppose that $F'$ is a Klein bottle. A Klein bottle cannot be embedded in $S^3$ so there exists at least one self-intersection loop. Since $\Sigma$ and $\Sigma'$ intersect only in standard arcs before the isotopy, the isotopy along $\mathcal{B}$ divides one standard arc into a standard arc and an essential intersection loop (See Figure \ref{change2}). Consider a new bigon $\mathcal{B}'$ after the isotopy as in Figure \ref{change}. Then one of the vertices of $\mathcal{B}'$ is on the standard arc intersection and the other is on the essential intersection loop, so $\mathcal{B}'$ is essential. This essential simple intersection loop is a self-intersection loop of $F'$ and $F'$ is a Klein bottle, so the self-intersection loop is either $1$-sided or $2$-sided and separating. Hence, the boundary of a regular neighborhood of the essential intersection loop on each spanning surface is incident to $\mathcal{B}'$ as in Figure \ref{essentialbigon}. Then, as in the proof of Theorem \ref{toroidal}, we can show that $K$ is alternating. By hypothesis, $K$ is non-alternating, hence, so $F'$ cannot be a Klein bottle. Hence, we still have an alternating diagram on an embedded torus. 

%Then, the boundary of a regular neighborhood of this self-intersection loop is incident to $\mathcal{B}'$.

%In the proof of Theorem \ref{toroidal}, we show that there exists a unique $1$-sided self-intersection loop. 

We can continue these isotopies to remove all intersections between $\mathcal{A}$ and $\Sigma'$. Then the almost-projection surface obtained from $\Sigma$ and $\Sigma'$ after isotopy is another unknotted torus, such that the alternating diagram on the torus has an annular region whose core is $\gamma$. We will show that $\gamma$ is incident to a bigon after isotopies. Then by the same argument as in the previous case (non-cellularly embedded diagram), it follows that $K$ is almost-alternating. 

First, we will show that the new bigon $\mathcal{B}'$ after the isotopy (as in Figure \ref{change}) is an essential bigon. Suppose $\mathcal{B}'$ is inessential. Then by definition, it can be homotoped to a standard arc intersection. This implies that two vertices of $\mathcal{B}'$ are on the same standard arc intersection and both edges of $\mathcal{B}'$ are homotopic to a subarc of the standard arc intersection on each spanning surface. Let $\beta$ be an edge of $\mathcal{B}'$ on $\Sigma$. Then $\beta$ and the subarc of the standard arc intersection cobound a disc on $\Sigma$ as in Figure \ref{change2}(left). If we undo the isotopy that we performed, we have an inessential loop intersection, which contradicts the assumption that $\Sigma$ and $\Sigma'$ are essentially intersecting. Hence, $\mathcal{B}'$ is essential. 

Furthermore, by the argument similar to \cite[Proposition 2.3]{MenascoThistlethwaite}, $\partial \mathcal{B'}$ is not an inessential curve on $F'$. Hence, $\mathcal{B'}$ is a compressing disc of $F'$. Consider $\partial \mathcal{B}'$ as a meridian of $F'$.   

Lastly, we will show that $\gamma$ is incident to $\mathcal{B}'$. It is equivalent to say that $\gamma$ intersects a meridian transversely once. If $\gamma$ is an inessential simple loop on $F'$, then the diagram is disconnected, which contradicts the assumption that $K$ is a knot. If $\gamma$ is isotopic to a meridian, then we can compress $F'$ along $\gamma$ to get an alternating diagram on a sphere. Hence, $\gamma$ intersects a meridian at least once. If $\gamma$ intersects a meridian more than once, then the meridian of $F'$ intersects the diagram more than twice. This implies that $\partial \mathcal{B}'$ intersects the diagram more than twice, but we showed in the proof of Theorem \ref{toroidal} that $\partial \mathcal{B}'$ intersects the diagram transversely twice. Hence, $\gamma$ intersects a meridian transversely once, which is equivalent to say that $\gamma$ is incident to $\mathcal{B}'$. 

%As in the proof of Theorem \ref{toroidal}, we can modify $\partial \mathcal{B}'$ on $F'$ so that $\partial \mathcal{B}'$ intersects the alternating diagram transversely twice. If $\partial \mathcal{B}'$ is trivial on $F$, then $\partial \mathcal{B}'$ bounds an alternating $1$-tangle. 

\begin{figure}[h]
\centering
\includegraphics[scale=0.3]{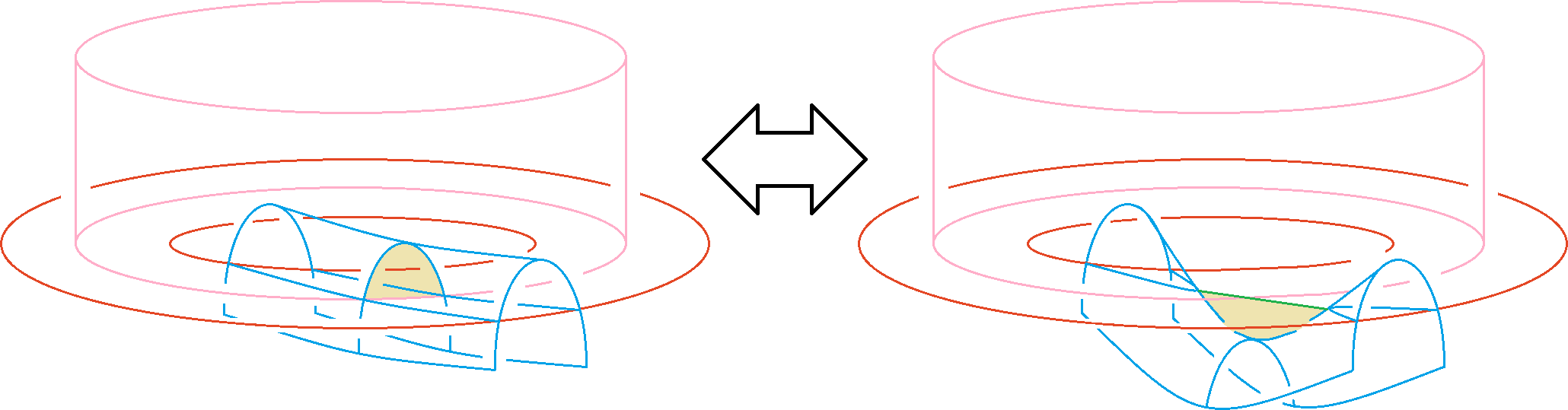}
\caption{A surface isotoped along a bigon.}
\label{change}
\end{figure}

\begin{figure}[h] 
\centering
\includegraphics[scale=0.5]{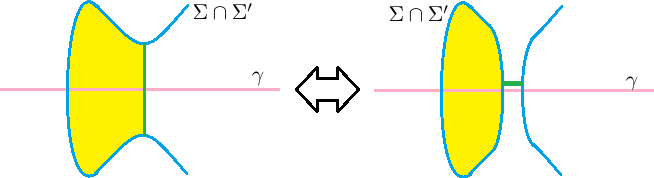}
\caption{Intersection pattern changes whenever we isotope a surface along a bigon.}
\label{change2}
\end{figure}

Now, to show the ``only if" part, suppose that the knot $K$ is almost-alternating. Consider almost-alternating diagram of $K$ as in Figure \ref{almostalternatingdiagram}(left). Then we can do a Reidemeister II move as in Figure \ref{almostalternatingdiagram}(middle) to make the diagram as in Figure \ref{almostalternatingdiagram}(right).
\begin{figure}[h]
\centering
\includegraphics[scale=0.3]{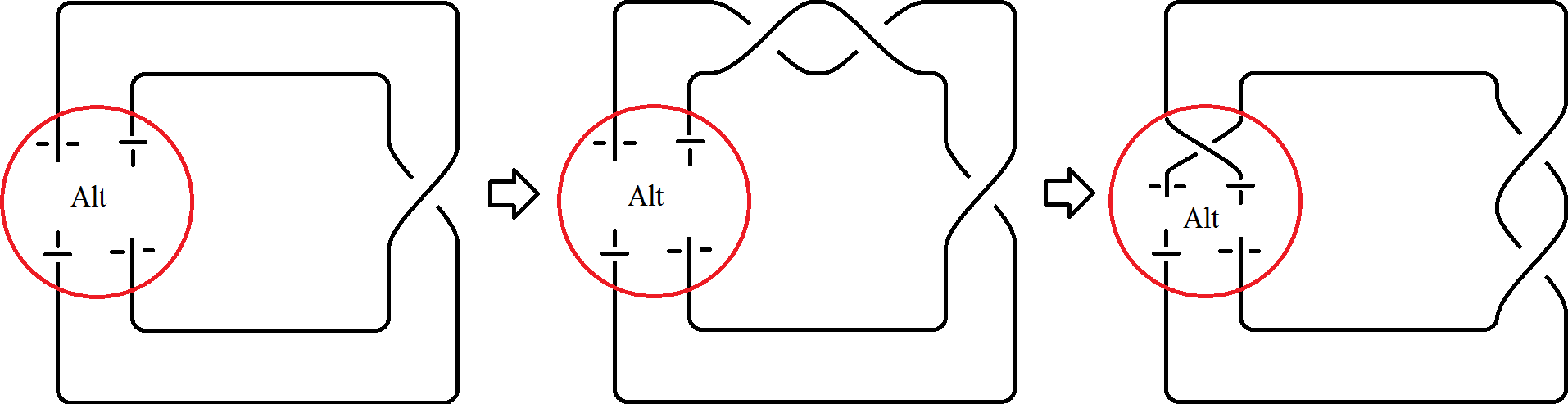}
\caption{}
\label{almostalternatingdiagram}
\end{figure}
Then $K$ has a checkerboard-colorable, non-cellularly embedded, alternating diagram $D_{K}'$ on an unknotted torus as in Figure \ref{different}(left). By Lemma \ref{complexity}, $D_{K}'$ has a unique annular region. The core of the annular region is liftable, so the two checkerboard surfaces of $D_{K}'$ are relatively separable. Now we need to show that every liftable curve is incident to a bigon. The core of the annular region is incident to the bigon shown in Figure \ref{different}(right). This bigon is essential because the two vertices of this bigon are contained in different standard arc intersections. If there exists another liftable curve, then it must intersect the standard arc intersection. Hence, as in the proof of the ``if" part of this theorem, we can find an essential bigon using the annulus bounded by the liftable curve and its push-off. 

This completes the proof of Theorem \ref{almostalternating}.

\begin{figure}[h]
\centering
\includegraphics[scale=0.25]{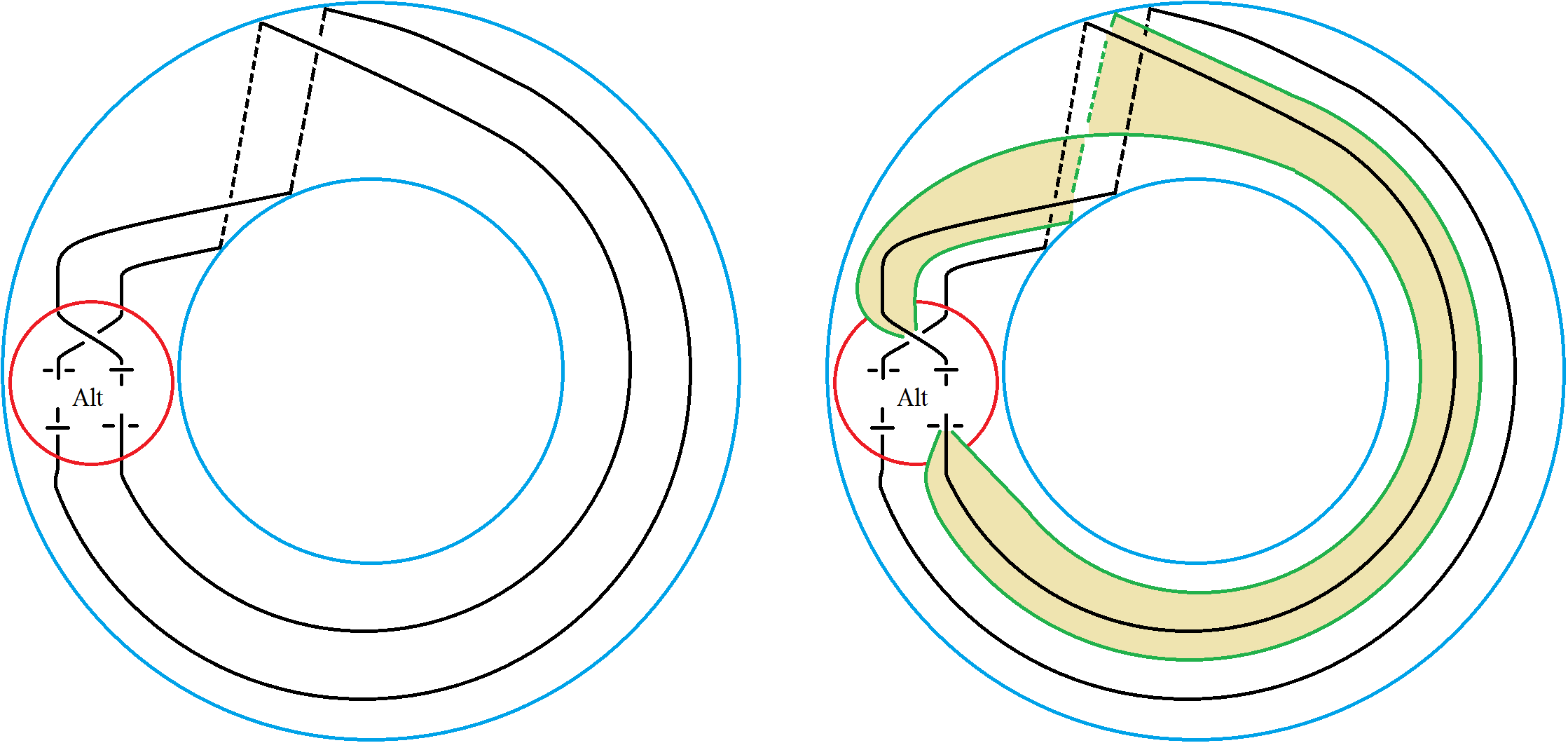}
\caption{}
\label{different}
\end{figure}

\end{proof}
%Suppose that it is inessential. Then it can be homotoped to an intersection of two spanning surfaces. Hence, two vertices of $B'$ is on the same intersection. Since the isotopy did not change the number of the standard arc intersections, $B$ also had its both end points on the same standard arc. Then we can perturb the projection of $\partial B$ on $T$ to a simple loop $\beta$, which intersect $\bar{\gamma}$ once. Then, $\beta$ is an essential simple loop on $T$, hence, $\beta$ is a boundary of compressing disc of $T$.    Note that $\gamma$ can be projected to a loop $\bar{\gamma}$ on almost projection surface with self-intersections. Furthermore, the self-intersections only occur on the vertices of $G_K$, i.e., crossings of $D_K$. 

%Ito \cite{Ito} gave a topological characterization of almost alternating knots. Our Corollary \ref{almostalternating} is a special case of it : From the proof of Theorem \ref{toroidal}, we can show that $K$ has a diagram with two maximally connected alternating $2$-tangles, such that one of the alternating tangles is $n$-full twists on two strands, for some non-zero integer $n$. However, by \cite[Proposition 6.6]{AbeKishimoto}, every knot diagram which is a cycle of two maximally connected alternating $2$-tangles can be altered into an almost-alternating diagram. 


\begin{thebibliography}{99}
\bibitem{Abe}
Abe, Tetsuya. 
\textit{``The Turaev genus of an adequate knot."}
Topology and its Applications 156.17 (2009): 2704-2712.

\bibitem{AbeKishimoto}
Abe, Tetsuya, and Kishimoto, Kengo . 
\textit{``The dealternating number and the alternation number of a closed 3-braid."}
Journal of Knot Theory and its Ramifications 19.09 (2010): 1157-1181.

\bibitem{Adams1}
Adams, Colin C. 
\textit{``Toroidally alternating knots and links."}
Topology 33.2 (1994): 353-369.

\bibitem{Adams2}
Adams, Colin C., et al. 
\textit{``Almost alternating links."}
Topology and its Applications 46.2 (1992): 151-165.

\bibitem{ArmondLowrance}
Armond, Cody W., and Lowrance, Adam M. 
\textit{``Turaev genus and alternating decompositions."} 
arXiv preprint arXiv:1507.02771 (2015).

%\bibitem{Banks}
%Banks, Jessica E. 
%\textbf{"Minimal genus Seifert surfaces for alternating links."}
%arXiv preprint arXiv:1106.3180 (2011).

%\bibitem{Cromwell}
%Cromwell, Peter R. 
%\textbf{"Homogeneous links."}
%Journal of the London Mathematical Society 2.3 (1989): 535-552.

%\bibitem{Crowell}
%Crowell, Richard H. 
%\textbf{"Genus of alternating link types."}
%Annals of Mathematics (1959): 258-275.

%\bibitem{Gabai}
%Gabai, David. 
%\textbf{"Genera of the alternating links."}
%Duke Math. J 53.3 (1986): 677-681.

%\bibitem{Gabai1}
%Gabai, David. 
%\textbf{"The Murasugi sum is a natural geometric operation."} 
%Low-dimensional topology (San Francisco, Calif., 1981), 131–143. Contemp. Math 20.

%\bibitem{Gabai2}
%Gabai, David. 
%\textbf{"The Murasugi sum is a natural geometric operation, II."} Contemp. Math 44 (1985): 93-100.
\bibitem{ChampanerkarKofman}
Champanerkar, Abhijit, and Kofman, Ilya. 
\textit{``A survey on the Turaev genus of knots."}
Acta Mathematica Vietnamica 39.4 (2014): 497-514.

\bibitem{Greene}
Greene, Joshua Evan. 
\textit{``Alternating links and definite surfaces."}
arXiv preprint arXiv:1511.06329 (2015).

%\bibitem{HirasawaSakuma}
%Hirasawa, Mikami, and Makoto Sakuma. 
%\textbf{"Minimal genus Seifert surfaces for alternating links."}
%Proceedings of Knots. Vol. 96. 1997.

\bibitem{Howie2}
Howie, Joshua. 
\textit{``A characterisation of alternating knot exteriors."}
arXiv preprint arXiv:1511.04945 (2015).

\bibitem{Howie}
Howie, Joshua Andrew. 
\textit{``Surface-alternating knots and links."}
Ph.D Thesis, The University of Melbourne 2015.

\bibitem{Ito}
Ito, Tetsuya. 
\textit{``A characterization of almost alternating knot."}
 arXiv preprint arXiv:1606.00558 (2016).

\bibitem{Effie}
Kalfagianni, Efstratia. 
\textit{``A characterization of adequate knots."}
arXiv preprint arXiv:1601.03307 (2016).

\bibitem{Kim}
Kim, Seungwon. 
\textit{``Link diagrams with low Turaev genus."}
arXiv preprint arXiv:1507.02918 (2015).

\bibitem{Lowrance}
Lowrance, Adam M. 
\textit{``Alternating distances of knots and links."}
Topology and its Applications 182 (2015): 53-70.

\bibitem{MenascoThistlethwaite}
Menasco, William, and Thistlethwaite, Morwen. 
\textit{``The classification of alternating links."}
Annals of Mathematics 138.1 (1993): 113-171.

\bibitem{Thistlethwaite}
Thistlethwaite, Morwen B. 
\textit{``An upper bound for the breadth of the Jones polynomial."}
Math. Proc. Cambridge Philos. Soc 103.3 (1988): 451-456.

%\bibitem{Murasugi}
%Murasugi, Kunio. 
%\textbf{"On the genus of the alternating knot II."}
%Journal of the Mathematical Society of Japan 10.3 (1958): 235-248.

%\bibitem{Ozawa}
%Ozawa, Makoto. 
%\textbf{"Closed incompressible surfaces in the complements of positive knots."}
%Commentarii Mathematici Helvetici 77.2 (2002): 235-243.

%More text.
\end{thebibliography}
\end{document}